\newtheorem{theorem}{Theorem}[section]
\theoremstyle{plain}
\newtheorem{corollary}[theorem]{Corollary}
\newtheorem{lemma}[theorem]{Lemma}
\newtheorem{proposition}[theorem]{Proposition}
\theoremstyle{definition}
\newtheorem{definition}[theorem]{Definition}
\newtheorem{remark}[theorem]{Remark}
\numberwithin{equation}{section}
\begin{document}

\title{Besov regularity in non-linear generalized functions}

 \author[S. Pilipovi\'{c}]{Stevan Pilipovi\'c}
 \thanks{The research of S. Pilipovi\'{c} was supported by the Project F10 (SP) 
of the Serbian Academy of Sciences and Arts}
 \address{Department of Mathematics and Informatics\\ University of Novi Sad\\ Trg Dositeja Obradovi\' ca 4\\ 21000 Novi Sad, Serbia}
\email {stevan.pilipovic@dmi.uns.ac.rs}

  \author[D. Scarpal\'{e}zos]{Dimitris Scarpal\'{e}zos}
\address{Centre de Math\' ematiques de Jussieu\\
Universit\' e Paris 7 Denis Diderot\\ Case Postale 7012, 2, place
Jussieu\\ F-75251 Paris Cedex 05, France}
    \email{dim.scarpa@gmail.com}   

 \author[J. Vindas]{Jasson Vindas}
\thanks{J. Vindas gratefully acknowledges support by Ghent University through the BOF-grant 01J04017 and by the Research Foundation--Flanders through the FWO-grant G067621N}
\address{Department of Mathematics: Analysis, Logic and Discrete Mathematics\\ Ghent University\\ Krijgslaan 281\\ 9000 Gent\\ Belgium}
\email{jasson.vindas@UGent.be}

 \subjclass[2010]{Primary 46F30; Secondary 46E35}
\keywords{Algebras of generalized functions; Besov  spaces; regularity of distributions}

 \begin{abstract}
We introduce and study new modules and spaces of generalized
functions that are related to the classical Besov spaces. 
 Various Schwartz distribution spaces are naturally embedded
 into our new generalized function spaces.
 We obtain precise criteria for detecting Besov regularity of distributions. 
\end{abstract}

\maketitle

\section{Introduction}\label{sec0}
Algebras of generalized functions supply effective tools for studying singular problems in analysis, geometry, and mathematical physics \cite{bia,col1,gkos,nps,ober001}.  Since the Colombeau type algebras contain the spaces of Schwartz distributions, one of the most standard and central questions in the area is then to find out whether a given generalized function is actually a classical ``smooth function". For example, this natural question arises when solving singular PDE.
The regularity theory for non-linear generalized functions was initiated by Oberguggenberger  \cite{ober001} with the introduction of his algebra of regular generalized functions, which provides the foundation for microlocal analysis in the Colombeau setting \cite{gah,kuhe,nps}; see also \cite{dvv18} for microlocal analysis with respect to Denjoy-Carlemann classes. In his pioneer work H\"{o}rmann \cite{hoermann} provided criteria for detecting H\"{o}rderian regularity; his study has been completed in \cite{piscavi,psv} with precise characterizations of H\"{o}lder-Zygmund classes inside the Colombeau algebra. 

The goal of this paper is to provide a non-linear framework for regularity theory based on the well-known Besov spaces \cite{peetre76}. It turns out that the classical (special) Colombeau algebra is not the right setting to work with Besov regularity. In Section \ref{section q-generalized}, we introduce and study a new scale of spaces of generalized functions, the spaces $\mathcal{G}_{q}(\Omega)$ where $\Omega$ is an open subset of $\mathbb{R}^{d}$ and the parameter $q\in[1,\infty]$ will be related to the Besov spaces. They are constructed in the same spirit as the Colombeau algebra, namely, as quotients of spaces of ``moderate" and ``negligible'' nets of smooth functions admitting embeddings of the space of distributions and respecting the multiplication of $C^{\infty}$-functions. However, unlike for Colombeau generalized functions, moderateness and negligibility are not given by pointwise bounds if $q<\infty$, but rather defined in terms of $L^q$-integrability bounds with respect to the net parameter.  Only when $q=\infty$ we obtain an algebra, and the construction in fact coincides with the so-called algebra of generalized functions with smooth parameter dependence $\mathcal{G}_{co}(\Omega)$ introduced by Burtscher and Kunzinger in \cite{bucu}. In all other cases, our spaces are not algebras, but nevertheless this is compensated by the fact that they are differential modules over  the differentiable algebra $\mathcal{G}_{co}(\Omega)$.

In Section \ref{section l besov gf} we introduce a family of subspaces of $\mathcal{G}_{q}(\Omega)$ that will allow us to characterize local membership to a given Besov space in a precise fashion. Our main regularity result is Theorem \ref{1mtheorem}. We present several other useful regularity criteria in Section \ref{section association}, where the hypotheses involve a generalization of the concept of strong association \cite{ps}. The article concludes with global counterparts of our results; in particular, we obtain in Section \ref{section global} a global characterization of the Besov spaces. 

\section{Preliminaries}
\label{preli Besov}
\subsection{Notation} \label{notation}Throughout the article $\Omega$ stands for an open subset of $\mathbb{R}^{d}$ and we always assume  that the parameters $p,q\in[1,\infty]$.  We consider the family of Sobolev norms
$||\phi||_{W^{k,p}(\Omega)}= \max
\{||\phi^{(\alpha)}||_{L^{p}(\Omega)}:\:|\alpha|\leq m \},$ where $k\in
{\mathbb N}_0$.  We indistinctly denote the space of smooth functions on $\Omega$ by 
$C^{\infty}(\Omega)=\mathcal{E}(\Omega).$ The notation $\omega\Subset \Omega$ means that $\omega$ is open and has compact closure inside $\Omega$. If $X$ is a locally convex space of distributions that is closed under multiplication by compactly supported smooth functions, we define its associated local space on $\Omega$ as $X_{loc}(\Omega)=\{T\in\mathcal{D}'(\Omega) : \chi \cdot T\in X \mbox{ for all }\chi\in\mathcal{D}(\Omega)\}$.
Finally, we also fix a mollifier $\phi\in\mathcal{S}(\mathbb{R}^{d})$ such that
\begin{equation}
\label{eq mollifier}
\int_{\mathbb{R}^{d}}\phi(x)dx=1 \qquad\mbox{and} \qquad \int_{\mathbb{R}^{d}}x^{\alpha}\phi(x)dx=0, \quad \mbox{for all }\alpha\in\mathbb{N}_{0}^{d}\setminus\{0\},
\end{equation}
and consider its associated delta net $(\phi_{\varepsilon})_{\varepsilon\in(0,1)}$, where $\phi_{\varepsilon}=\varepsilon^{-d}\phi(\cdot/\varepsilon)$.

\subsection{Algebras of generalized functions}
 The standard (special) Colombeau algebra of generalized functions is defined as the quotient $\mathcal{G}(\Omega)=\mathcal{E}_{M}(\Omega)/\mathcal{N}(\Omega),$ where 
the algebra (of moderate nets) $\mathcal{E}_{M}(\Omega)$  and its ideal (of negligible nets) $\mathcal  N(\Omega)$
respectively consist of nets $(f_{\varepsilon
})_{\varepsilon\in(0,1)}=(f_{\varepsilon
})_{\varepsilon}\in \mathcal{E}(\Omega)^{(0,1)}$ with the
properties
\begin{equation}\label{2drs}
(\forall k\in\mathbb{N}_0)(\forall
\omega\subset\subset\Omega)(\exists a\in\mathbb{R})
(||f_{\varepsilon}||_{W^{k,p}(\omega)}=O(\varepsilon^{a})),
\end{equation}
\[
\mbox{ resp., } \;(\forall k\in\mathbb{N}_0)(\forall
\omega\subset\subset \Omega)(\forall b\in\mathbb{R})
(||f_{\varepsilon}||_{W^{k,p}(\omega)}=O(\varepsilon^{b}))
\]
(big $O$ and small $o$ are the Landau symbols). The well-known Sobolev embedding theorems guarantee that these definitions are independent of $p$, being $p=\infty$ the standard choice in the literature \cite{gkos}. The ring of generalized constants $\widetilde{\mathbb{C}}$ is obtained by considering the subrings of nets of $\mathcal{E}_{M}(\Omega)$  and $\mathcal  N(\Omega)$
consisting of only constant functions and then forming the corresponding quotient. If one further restricts to nets of real valued constant functions, the resulting quotient algebra is $\widetilde{\mathbb{R}}$. The space of generalized functions $\mathcal{G}(\Omega)$  then becomes a module over the ring $\widetilde{\mathbb{C}}$. We refer to \cite{bia,col1,gkos} for more details on Colombeau algebras. See also \cite{dvv18,dvv19} for non-linear theories of ultradistributions and hyperfunctions.

The representatives $(f_{\varepsilon})_{\varepsilon}$ of an equivalence class $[(f_{\varepsilon})_{\varepsilon}]\in \mathcal{G}(\Omega)$ are allowed to depend arbitrarily on the parameter $\varepsilon$. Following \cite{bucu}, it is also meaningful to consider algebras of generalized functions with continuous or smooth dependence on the parameter $\varepsilon$. In fact, if in addition to \eqref{2drs} one imposes in the definitions of moderate and negligible nets that the functions $(\varepsilon,x)\mapsto f_{\varepsilon}(x)$ should be\footnote{Equivalently, the vector-valued mapping $\varepsilon\mapsto f_{\varepsilon}$ belongs to $C((0,1);\mathcal{E}(\Omega))$ or $C^{\infty}((0,1);\mathcal{E}(\Omega))$, respectively.} continuous in $\varepsilon$ and smooth in $x$, or respectively smooth in both variables $(\varepsilon,x)$, one denotes the corresponding so resulting quotient algebras as $\mathcal{G}_{co}(\Omega)$ and $\mathcal{G}_{sm}(\Omega)$. We then obtain canonical  algebra monomorphisms $\mathcal{G}_{sm}(\Omega) \to \mathcal{G}_{co}(\Omega)\to\mathcal{G}(\Omega)$. We might simply then identify the algebra in the left-hand side of the arrow as a subalgebra of the one in the right-hand side. Interestingly, it has been shown in \cite{bucu} that $\mathcal{G}_{sm}(\Omega)=\mathcal{G}_{co}(\Omega)\subsetneq \mathcal{G}(\Omega)$. Similarly, one can also consider the rings of generalized constants with continuous or smooth dependence and one has $\widetilde{\mathbb{C}}_{sm}=\widetilde{\mathbb{C}}_{co}\subsetneq \widetilde{\mathbb{C}}$ and $\widetilde{\mathbb{R}}_{sm}=\widetilde{\mathbb{R}}_{co}\subsetneq \widetilde{\mathbb{R}}$. 

\subsection{Besov spaces} There are different equivalent ways to introduce the Besov spaces. Here we follow the approach presented in \cite[Example 6.2, p.~294]{pv2019} in terms of generalized Littlewood-Paley pairs, which suits well the purposes of this article. Similar dyadic versions can be found in \cite{peetre76}.

Let $s\in\mathbb{R}$. We say that $(\varphi,\psi)$ is an \emph{LP-pair} of order $s$ if $\varphi,\psi\in\mathcal{S}(\mathbb{R}^{d})$ and if they satisfy the following compatibility conditions:
\begin{equation}
\label{lpcond1}
(\exists \sigma>0,\exists\eta\in(0,1))(|\hat{\varphi}(\xi)|>0  \mbox{ for }  \left|\xi\right|\leq \sigma \mbox{ and }|\hat {\psi}(\xi)|>0  \mbox{ for } \eta\sigma\leq\left|\xi\right|\leq \sigma)
\end{equation}
and 
\begin{equation}
\label{lpcond2}
 \int_{\mathbb{R}^{d}}t^{\alpha}\psi(t)dt=0  \: \mbox{ for } \: \left|\alpha\right|\leq\lfloor s\rfloor.
\end{equation} 
Note that when $s<0$ the condition on the moments is empty, thus the vanishing requirement \eqref{lpcond2} is dropped in this case. (For example, $(\phi,\phi)$ with $\phi$ the mollifier we have fixed in Subsection \ref{notation} is an LP-pair of order $s<0$.) Then, the Besov space $B^{s}_{p,q}(\mathbb{R}^{d})$ is defined as the Banach space of all distributions $T\in\mathcal{S}'(\mathbb{R}^{d})$ satisfying
\begin{equation}
\
\label{zeq} \left\|T\right\|_{B^{s}_{p,q}(\mathbb{R}^{d})}:=||T\ast\varphi||_{L^{p}(\mathbb{R}^{d})}
+\left(\int_{0}^{1}y^{-qs}||T\ast\psi_y||^{q}_{L^{p}(\mathbb{R}^{d})}\frac{dy}{y}\right)^{\frac{1}{q}}<\infty.
\end{equation}
where as usual $\psi_{y}=y^{-d}\psi(\cdot/y)$.
The definition and the norm (\ref{zeq}) (up to equivalence) are independent of the choice of the pair $(\varphi,\psi)$ 
as long as (\ref{lpcond1}) and (\ref{lpcond2}) hold \cite[Example 6.2, p.~294]{pv2019}. The flexibility to work with different LP-pairs will play an essential role in some of our arguments below. Let us remark that when $T\in\mathcal{E}'(\Omega)$, then $T\ast\varphi\in \mathcal{S}(\mathbb{R}^{d})\subset L^{p}(\mathbb{R}^{d})$ so that $T\in B^{s}_{p,q}(\mathbb{R}^{d})$ if and only if $\int_{0}^{1}y^{-qs-1}||T\ast\psi_y||^{q}_{L^{p}(\mathbb{R}^{d})}dy<\infty.$
 
\section{The new modules of generalized functions $\mathcal{G}_{q}(\Omega)$}
\label{section q-generalized}

We shall introduce the new space of generalized functions $\mathcal{G}_{q}(\Omega)$. Recall that our convention is to always assume that $p,q\in[1,\infty]$.

 We  say that a net 
 $(f_\varepsilon)\in{\mathcal E}(\Omega)^{(0,1)}$ belongs to $\mathcal E_{q}(\Omega)$, 
 respectively to $\mathcal N_{q}(\Omega)$, if $(\varepsilon,x)\mapsto f_{\varepsilon}(x)$ is also continuous with respect to $\varepsilon$  
 and  satisfies the integral growth estimates (with the obvious modification when $q=\infty$)
\begin{equation}
\label{eql1}
(\forall k\in\mathbb N_0)(\forall \omega\Subset \Omega)(\exists s\in \mathbb R)
\Big(\int_0^1\varepsilon^{qs}||f_\varepsilon||_{W^{k,p}(\omega)}^q\frac{d\varepsilon}{\varepsilon}<\infty\Big),
\end{equation}
respectively,
\begin{equation}
\label{eql2}
(\forall k\in\mathbb N_0)(\forall \omega\Subset \Omega)(\forall s\in \mathbb R)
\Big(\int_0^1\varepsilon^{qs}||f_\varepsilon||_{W^{k,p}(\omega)}^q\frac{d\varepsilon}{\varepsilon}<\infty\Big).
\end{equation}
By Sobolev's lemma, the definitions of these two spaces of nets are independent of the value of $p$. Also, one has a null description of $\mathcal{N}_{q}(\Omega)$.

\begin{lemma}
\label{lemma null ideal q} We have that $(f_{\varepsilon})_{\varepsilon}\in \mathcal{E}_{q}(\Omega)$ belongs to $\mathcal{N}_{q}(\Omega)$ if and only if
\[
(\forall \omega\Subset \Omega)(\forall s\in \mathbb R)
\Big(\int_0^1\varepsilon^{qs}||f_\varepsilon||_{L^{p}(\omega)}^q\frac{d\varepsilon}{\varepsilon}<\infty\Big).
\]
\end{lemma}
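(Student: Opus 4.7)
The ``only if'' direction is immediate by taking $k=0$ in \eqref{eql2}. The substantive content is the converse: I must upgrade an $L^p$-negligibility bound (holding for all $s\in\mathbb{R}$) to a $W^{k,p}$-negligibility bound, using only that $(f_\varepsilon)_\varepsilon$ is known to be moderate. The standard lever for such an upgrade is an interpolation inequality of Gagliardo--Nirenberg type in $x$ combined with H\"older's inequality in the net parameter $\varepsilon$, and the proof below follows exactly the strategy that underlies the analogous null characterization in the classical Colombeau setting.

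Fix $k\in\mathbb{N}_0$ and $\omega\Subset\Omega$. The case $k=0$ is the hypothesis, so assume $k\geq 1$ and set $m:=k+1$, so that $\theta:=k/m\in(0,1)$. I would choose an intermediate $\omega\Subset\omega'\Subset\Omega$ together with a cutoff $\chi\in\mathcal{D}(\omega')$ equal to $1$ on $\omega$. Applying the Gagliardo--Nirenberg inequality for compactly supported smooth functions to $\chi f_\varepsilon\in C_c^{\infty}(\mathbb{R}^{d})$ for each multi-index $|\alpha|\leq k$, then using the Leibniz rule to pass from $\|\chi f_\varepsilon\|_{W^{m,p}(\mathbb{R}^d)}$ to $\|f_\varepsilon\|_{W^{m,p}(\omega')}$ (and the same for $L^p$), I obtain an interpolation estimate of the form
\begin{equation*}
\|f_\varepsilon\|_{W^{k,p}(\omega)}\;\leq\; C\,\bigl(\|f_\varepsilon\|_{L^p(\omega')}+\|f_\varepsilon\|_{L^p(\omega')}^{1-\theta}\,\|f_\varepsilon\|_{W^{m,p}(\omega')}^{\,\theta}\bigr),
\end{equation*}
with $C$ independent of $\varepsilon$.

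Now, given an arbitrary $s\in\mathbb{R}$, I would integrate the $q$th power of this estimate against $d\varepsilon/\varepsilon$. The contribution of the first term is finite by the hypothesis applied with exponent $s$. For the second term, moderateness of $(f_\varepsilon)_\varepsilon$ supplies some $s_1\in\mathbb{R}$ with $\int_0^1\varepsilon^{qs_1}\|f_\varepsilon\|_{W^{m,p}(\omega')}^{q}\,d\varepsilon/\varepsilon<\infty$. Setting $s_2:=(s-\theta s_1)/(1-\theta)$, I exploit the factorization
\begin{equation*}
\varepsilon^{qs}\,\|f_\varepsilon\|_{L^p(\omega')}^{q(1-\theta)}\,\|f_\varepsilon\|_{W^{m,p}(\omega')}^{q\theta}=\bigl(\varepsilon^{qs_2}\|f_\varepsilon\|_{L^p(\omega')}^{q}\bigr)^{1-\theta}\bigl(\varepsilon^{qs_1}\|f_\varepsilon\|_{W^{m,p}(\omega')}^{q}\bigr)^{\theta}
\end{equation*}
and apply H\"older's inequality with conjugate exponents $1/(1-\theta)$ and $1/\theta$ against $d\varepsilon/\varepsilon$. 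The first factor is finite by the $L^p$ hypothesis applied with exponent $s_2$, and the second by the choice of $s_1$, so the product is finite as required.

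The main (minor) obstacle is simply the bookkeeping of the interpolation inequality after cutting off: one must verify that the Gagliardo--Nirenberg inequality is available in the multiplicative form $\|D^j u\|_{L^p}\lesssim \|u\|_{L^p}^{1-j/m}\|u\|_{W^{m,p}}^{j/m}$ uniformly for compactly supported smooth $u$, and then reduce the mixed-order $W^{k,p}$ norm to this via $\max_{0\le j\le k}$, controlling the maximum by the $j=0$ and $j=k$ endpoints. For $q=\infty$, the same scheme applies with integrals replaced by essential suprema and H\"older replaced by the submultiplicativity $\operatorname*{ess\,sup}(FG)\leq(\operatorname*{ess\,sup}F)(\operatorname*{ess\,sup}G)$.
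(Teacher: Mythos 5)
Your argument is correct and is essentially the paper's own proof: the paper likewise localizes to a fixed compact set and then combines the multivariate Landau--Kolmogorov (Gagliardo--Nirenberg type) interpolation inequality with H\"older's inequality in the parameter $\varepsilon$, exactly the scheme you carry out in detail. The only difference is that you spell out the bookkeeping (cutoff, choice of $s_1$, $s_2$, and the exponents $1/(1-\theta)$, $1/\theta$) that the paper leaves implicit.
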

\begin{proof} Since our notions are local, we may assume that all $f_{\varepsilon}$ have support on a fixed compact $K\subset \Omega$. The result is then an immediate consequence of the multivariate version of the Landau-Kolmogorov inequality (see e.g. \cite[Eq. (4.2)]{dvv18}) and H\"{o}lder's inequality.
\end{proof}

We call the elements of $\mathcal{E}_{q}(\Omega)$ nets of smooth functions with $L^{q}$-moderate growth, while the 
ones of $\mathcal{N}_{q}(\Omega)$ will be referred as $L^{q}$-negligible nets. 
 We then set
$$\mathcal{G}_{q}(\Omega):=
\mathcal{E}_{q}(\Omega)/\mathcal{N}_{q}(\Omega).
$$

If we consider the subspaces of $\mathcal{E}_{q}(\Omega)$ and $\mathcal{N}_{q}(\Omega)$ consisting of nets of constants, respectively real constants, and then their quotient spaces, we obtain the spaces of generalized constants $\widetilde{\mathbb{C}}_{q}$ and $\widetilde{\mathbb{R}}_{q}$. We note that $\mathcal{G}_{\infty}(\Omega)=\mathcal{G}_{co}(\Omega)$, $\widetilde{\mathbb{C}}_{\infty}=\widetilde{\mathbb{C}}_{co}$, $\widetilde{\mathbb{R}}_{\infty}=\widetilde{\mathbb{R}}_{co}$, and they are algebras over the complex numbers. Furthermore, $\mathcal{G}_{q}(\Omega)$ and $\widetilde{\mathbb{C}}_{q}$ are both modules over the ring $\widetilde{\mathbb{C}}_{\infty}$. Note also that $\mathcal{G}_{q}(\Omega)$ is a module over $\mathcal{G}_{\infty}(\Omega)$. All multiplication operations as well as the action of partial derivatives are defined here in the natural way: via multiplication or differentiation of representatives. 

We now discuss the embedding of distributions into $\mathcal{G}_{q}(\Omega)$. The procedure is similar to that for classical Colombeau algebras, so we briefly sketch it.

 We first notice that using $C^{\infty}$ partitions of the unity one
can show that the functor $\omega\mapsto\mathcal{G}_q(\omega)$ is a \emph{fine sheaf} of differential modules (over the fine sheaf of differential algebras $\mathcal{G}_{\infty}$) on $\Omega$. In
fact, this is not so hard to directly verify, but we mention the proof can  considerably be
simplified by reasoning exactly as in \cite[Theorem 1.2.4]{gkos} with the aid of Lemma \ref{lemma null ideal q}.

Let $\mathcal{G}_{q,c}(\Omega)$ be the subspace of $\mathcal{G}_{q}(\Omega)$ consisting of compactly supported elements (or equivalently, those generalized functions that admit compactly supported representatives). The embedding of the compactly supported distributions is realized through
\begin{equation}
\label{embedding eq}
\iota: \mathcal{E}^{\prime}(\Omega)\to \mathcal{G}_{q,c}(\Omega), \qquad \mbox{where }\iota(T)=
[((T\ast\phi_{\varepsilon})_{|{\Omega}})_\varepsilon]\in\mathcal{G}_{q,c}(\Omega).
\end{equation}
We recall that here $\phi$ is the mollifier that we have already fixed in Subsection \ref{notation}, i.e., one that satisfies \eqref{eq mollifier}. As in the case of the classical Colombeau algebra, one has that $\iota$ is a support preserving map (and in particular injective).
\begin{lemma}
\label{lemma supp} The map \eqref{embedding eq} is support preserving.
\end{lemma}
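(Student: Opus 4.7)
The plan is to prove both inclusions $\operatorname{supp}(\iota(T))\subseteq\operatorname{supp}(T)$ and $\operatorname{supp}(T)\subseteq\operatorname{supp}(\iota(T))$, exploiting the sheaf structure of $\mathcal{G}_{q}$ recorded just above the statement.

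For the first inclusion, I fix an open $\omega\Subset \Omega\setminus\operatorname{supp}(T)$ and set $\delta:=\operatorname{dist}(\omega,\operatorname{supp}(T))>0$. Writing derivatives as $\partial^{\alpha}(T\ast\phi_{\varepsilon})(x)=\langle T_{y},(\partial^{\alpha}\phi_{\varepsilon})(x-y)\rangle$, combined with $T$ being of finite order and the Schwartz decay estimate $|\partial^{\beta}\phi(z)|\leq C_{N,\beta}(1+|z|)^{-N}$ (valid for every $N$), I obtain, for $x\in\omega$ and $y$ in a fixed compact neighborhood of $\operatorname{supp}(T)$, the rescaled bound $|\partial^{\beta}\phi_{\varepsilon}(x-y)|\leq C\varepsilon^{N-d-|\beta|}\delta^{-N}$. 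Since $T$ only sees finitely many derivatives, this yields $\sup_{x\in\omega}|\partial^{\alpha}(T\ast\phi_{\varepsilon})(x)|=O(\varepsilon^{N})$ for every $\alpha$ and every $N$, which trivially implies the $L^{q}$-negligibility condition \eqref{eql2} on $\omega$. Hence $\iota(T)$ vanishes on every such $\omega$, giving $\operatorname{supp}(\iota(T))\subseteq\operatorname{supp}(T)$.

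For the reverse inclusion I argue by contradiction. Suppose $U\subseteq\Omega$ is open with $\iota(T)|_{U}=0$ in $\mathcal{G}_{q}(U)$ but $T|_{U}\neq 0$; pick $\chi\in\mathcal{D}(U)$ with $\langle T,\chi\rangle\neq 0$ and an $\omega\Subset U$ containing $\operatorname{supp}\chi$. Since $(T\ast\phi_{\varepsilon})\to T$ in $\mathcal{D}'(\mathbb{R}^{d})$ as $\varepsilon\to 0^{+}$ (a standard consequence of $\phi$ being a Schwartz approximation of identity with $\int\phi=1$), there exist $c,\varepsilon_{0}>0$ with $|\langle T\ast\phi_{\varepsilon},\chi\rangle|\geq c$ for $\varepsilon\in(0,\varepsilon_{0})$. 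By H\"older's inequality this forces $\|T\ast\phi_{\varepsilon}\|_{L^{p}(\omega)}\geq c/\|\chi\|_{L^{p'}}=:c'>0$ on $(0,\varepsilon_{0})$. On the other hand, Lemma \ref{lemma null ideal q} together with the assumed negligibility gives $\int_{0}^{1}\varepsilon^{qs}\|T\ast\phi_{\varepsilon}\|_{L^{p}(\omega)}^{q}\,\frac{d\varepsilon}{\varepsilon}<\infty$ for every $s\in\mathbb{R}$; choosing $s<0$ makes even $\int_{0}^{\varepsilon_{0}}\varepsilon^{qs-1}\,d\varepsilon$ diverge, a contradiction. (The case $q=\infty$ is immediate from the standard negligibility formulation.)

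The main delicacy is the second inclusion: negligibility is an integral condition in the parameter $\varepsilon$ rather than a pointwise one, so one must convert it into information that rules out a positive lower bound on $\|T\ast\phi_{\varepsilon}\|_{L^{p}(\omega)}$ as $\varepsilon\to 0^{+}$. The key observation is that the \emph{arbitrarily negative} weight $\varepsilon^{qs}$ allowed in \eqref{eql2} is incompatible with any such lower bound, so no regularity in $\varepsilon$ of the net is actually needed for the argument. The first inclusion, by contrast, is purely a matter of bookkeeping with the Schwartz decay of the chosen mollifier.
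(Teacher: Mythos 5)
Your proof is correct, but it is organized differently from the paper's. For the inclusion $\operatorname{supp}\iota(T)\subseteq\operatorname{supp}T$ the paper simply invokes the known $q=\infty$ (classical Colombeau/$\mathcal{G}_{co}$) result together with $\mathcal{N}_{\infty}(\Omega)\subset\mathcal{N}_{q}(\Omega)$, whereas you reprove it directly from the Schwartz decay of $\phi$ and the finite order of $T\in\mathcal{E}'(\Omega)$; that is fine, though note you should keep $y$ in a compact neighborhood of $\operatorname{supp}T$ at distance, say, $\delta/2$ from $\omega$ rather than $\delta$ -- a harmless constant adjustment. For the reverse (hard) inclusion the two arguments genuinely differ: the paper exploits the vanishing moments \eqref{eq mollifier} to get $\langle T-T\ast\phi_{\varepsilon},\rho\rangle=O(\varepsilon^{b})$ and then averages over dyadic intervals $[\eta,2\eta]$ with respect to $\frac{d\varepsilon}{\varepsilon}$ to conclude $\langle T,\rho\rangle=0$ as $\eta\to0^{+}$; you instead argue by contradiction using only the qualitative convergence $T\ast\phi_{\varepsilon}\to T$ in $\mathcal{D}'$ (which needs just $\int\phi=1$), H\"older's inequality to convert $|\langle T\ast\phi_{\varepsilon},\chi\rangle|\geq c$ into a lower bound on $\|T\ast\phi_{\varepsilon}\|_{L^{p}(\omega)}$, and the incompatibility of such a lower bound with \eqref{eql2} (already $s=0$ suffices, by divergence of $\int_{0}^{\varepsilon_{0}}\frac{d\varepsilon}{\varepsilon}$). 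Both proofs ultimately rest on the scale-invariant measure $\frac{d\varepsilon}{\varepsilon}$, but yours is more robust in that it does not use the moment conditions on the mollifier at all, while the paper's is shorter because it recycles the rapid-approximation estimate it needs elsewhere anyway.
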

\begin{proof} Let $T\in\mathcal{E}'(\Omega)$. The case $q=\infty$ is well-known (cf. \cite{gkos}). This already yields $\operatorname*{sup} \iota(T)\subseteq\operatorname*{sup} T$, where the first support set is now with respect to the sheaf $\mathcal{G}_{q}$. To show the reverse inclusion, assume that $\iota(T)_{|\omega}=0$ in $\mathcal{G}_{q}(\omega)$ and let $\rho\in\mathcal{D}(\omega)$. It is also well-known (\cite{gkos}) that $\langle T-T\ast \phi_{\varepsilon}, \rho\rangle=O(\varepsilon^{b})$ for each $b>0$ (which easily follows from Taylor's formula and the moment vanishing conditions from \eqref{eq mollifier}). Since $\iota(T)_{|\omega}=0$ in $\mathcal{G}_{q}(\omega)$, obtain $\langle T\ast \phi_{\varepsilon}, \rho\rangle=O(\varepsilon^{b})$ also for every $b>0$. In particular, if $0<\eta<1/2$, we have, for some $C$ not depending on $\eta$,
$$ (\log 2)| \langle T, \rho\rangle|\leq  \int_{\eta}^{2\eta} (|\langle T- T\ast \phi_{\varepsilon}, \rho\rangle|+|\langle T\ast \phi_{\varepsilon}, \rho\rangle|)\frac{d\varepsilon}{\varepsilon}\leq C\int_{\eta}^{2\eta}d\varepsilon= C\eta.
$$
Taking $\eta\to0^{+}$, we obtain $\langle T, \rho\rangle=0$ and, since $\rho$ was arbitrary, the distribution $T$ vanishes on $\omega$. This gives the other inclusion and therefore $\operatorname*{sup} \iota(T)=\operatorname*{sup} T$.
\end{proof}
 It is
a standard exercise in sheaf theory to show that any support preserving linear embedding between the
compactly supported global sections of two fine sheaves on a Hausdorff second countable
locally compact topological space can be uniquely extended to a sheaf embedding\footnote{In
fact, the latter conclusion remains valid if one replaces fineness of the sheaves by the
weaker hypothesis of softness  \cite[Lemma 2.3, p. 228]{komatsu73}.}. In particular, we obtain that the embeddings \eqref{embedding eq} give rise to a unique sheaf embedding $\iota:{\mathcal{D}'}\to \mathcal{G}_{q}$. As in the classical case, the vanishing moment conditions \eqref{eq mollifier} ensure that the restriction of $\iota$ to $C^{\infty}$ is simply given by constant nets $\iota(f)=[(f)_{\varepsilon}]$; in particular, $\iota(f\cdot g)=\iota(f)\cdot \iota(g)$ for any $f,g\in C^{\infty}(\Omega)$. Let us collect some of the properties of $\mathcal{G}_{q}$ that we have discussed so far.
\begin{proposition}\label{11nov} $\mathcal{G}_{q}$ is a fine sheaf of $\widetilde{\mathbb{C}}_{\infty}$-modules on $\mathbb{R}^{d}$. Furthermore, under multiplication induced by pointwise multiplication of representatives, $\mathcal{G}_{q}$ is a sheaf of differential modules over the sheaf of differential algebras $\mathcal{G}_{\infty}$, and in particular over $C^{\infty}$. Furthermore, we have a linear embedding $\iota:\mathcal{D}'\to \mathcal{G}_{q}$ that takes the form \eqref{embedding eq} on compact sections. Moreover, $\iota(C^{\infty}\cdot C^{\infty})= \iota(C^{\infty})\cdot \iota(C^{\infty}).$
\end{proposition}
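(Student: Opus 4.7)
The plan is to assemble the proposition from the structural pieces already prepared in the preceding discussion, so the main task is to verify that each assertion is well-posed at the quotient level and then to invoke standard sheaf-theoretic machinery for the global embedding.

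First I would establish the sheaf property. The restriction maps $\mathcal{G}_q(\Omega)\to\mathcal{G}_q(\omega)$ are induced by restricting representatives, and both moderateness \eqref{eql1} and negligibility \eqref{eql2} are local conditions over compact subsets, so restriction is well-defined. To check the glueing axiom on an open cover $\{\Omega_i\}$ of $\Omega$, I would take compatible local representatives $(f^{i}_\varepsilon)$, fix a smooth locally finite partition of unity $\{\chi_j\}$ subordinate to the cover, and glue by $f_\varepsilon=\sum_j\chi_j f^{i(j)}_\varepsilon$. The verification that this lies in $\mathcal{E}_q(\Omega)$ and is independent of all the choices modulo $\mathcal{N}_q(\Omega)$ follows the classical Colombeau pattern \cite[Theorem 1.2.4]{gkos}, with the essential simplification that Lemma \ref{lemma null ideal q} reduces the negligibility test to an $L^p$-estimate with no derivatives, exactly as for $\mathcal{G}(\Omega)$. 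Fineness is a consequence of the module structure over $\mathcal{G}_\infty$: ordinary smooth partitions of unity already multiply $\mathcal{G}_q$-sections.

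Next I would check the module and differential structure. That $\mathcal{E}_\infty(\Omega)\cdot\mathcal{E}_q(\Omega)\subseteq\mathcal{E}_q(\Omega)$ and $\mathcal{E}_\infty(\Omega)\cdot\mathcal{N}_q(\Omega)+\mathcal{N}_\infty(\Omega)\cdot\mathcal{E}_q(\Omega)\subseteq\mathcal{N}_q(\Omega)$ follows from the Leibniz rule for Sobolev norms together with H\"older: the $\mathcal{G}_\infty$-factor contributes only a uniform $\varepsilon^{-N}$ loss for some fixed $N$, which is absorbed into the exponent $s$ in the $L^q$-in-$\varepsilon$ bound, while in the negligible case the other factor can take any such exponent. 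Stability under $\partial^\alpha$ is immediate from the definitions. Hence the pointwise product and derivative descend to well-defined operations on $\mathcal{G}_q$, making it a differential module over $\mathcal{G}_\infty$ and in particular over $C^\infty$.

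For the embedding, when $T\in\mathcal{E}'(\Omega)$ the net $(T\ast\phi_\varepsilon)_\varepsilon$ is in $\mathcal{E}_\infty(\Omega)\subseteq\mathcal{E}_q(\Omega)$ by standard estimates on convolution with a delta net, so \eqref{embedding eq} produces a linear map, and Lemma \ref{lemma supp} shows it is support preserving. I would then extend $\iota$ to a sheaf morphism $\mathcal{D}'\to\mathcal{G}_q$ by the standard fact \cite[Lemma 2.3, p.~228]{komatsu73} that a support-preserving linear embedding between compactly supported global sections of two soft sheaves on a paracompact Hausdorff space extends uniquely to a sheaf morphism; both sheaves are fine, hence soft. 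I expect this extension step to be the only non-routine point, but it is entirely formal once support preservation is in hand.

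Finally, for $f\in C^{\infty}(\Omega)$, Taylor's formula together with the vanishing moment conditions \eqref{eq mollifier} yields, on each $\omega\Subset\Omega$ and each $k\in\mathbb{N}_0$, bounds of the form $\|f-f\ast\phi_\varepsilon\|_{W^{k,\infty}(\omega)}=O(\varepsilon^{N})$ for every $N$. Integrating against $\varepsilon^{qs-1}$ on $(0,1)$ places $(f-f\ast\phi_\varepsilon)_\varepsilon$ in $\mathcal{N}_q(\Omega)$, so $\iota(f)=[(f)_\varepsilon]$ as a constant net. The identity $\iota(fg)=\iota(f)\cdot\iota(g)$ for $f,g\in C^\infty(\Omega)$, equivalently $\iota(C^{\infty}\cdot C^{\infty})=\iota(C^{\infty})\cdot\iota(C^{\infty})$, is then immediate from pointwise multiplication of constant representatives.
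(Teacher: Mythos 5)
Your proposal is correct and follows essentially the same route as the paper: the sheaf axioms via partitions of unity and Lemma \ref{lemma null ideal q} (as in \cite[Theorem 1.2.4]{gkos}), the module and differential structure via H\"older-type estimates on representatives, the embedding through the support-preserving map of Lemma \ref{lemma supp} extended by the soft-sheaf argument of \cite[Lemma 2.3, p.~228]{komatsu73}, and the $C^{\infty}$ compatibility from the vanishing moments \eqref{eq mollifier}. The paper only sketches these steps, so your write-up is simply a more explicit version of the same argument.
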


We end this section with some remarks.

\begin{remark}
\label{rm 1 q-generalized}
Unless $q=\infty$, the modules $\widetilde{\mathbb{C}}_{q}$ and $\mathcal{G}_{q}(\Omega)$ are not algebras. In fact, consider the net of constant functions $(f_{\varepsilon})_{\varepsilon}$ given as follows. For $\varepsilon\in [n^{-1}-e^{-n},n^{-1}+e^{-n}]$ and $n\geq4$ we define $f_{\varepsilon}=n^{-2}e^{n/q}$ if $\varepsilon\in [n^{-1}-e^{-n}/2,n^{-1}+e^{-n}/2]$ and then extend it linearly on each $ [n^{-1}-e^{-n},n^{-1}-e^{-n}/2]$ and $ [n^{-1}+e^{-n}/2,n^{-1}+e^{-n}]$ in such a way that $f_{n^{-1}-e^{-n}}=f_{n^{-1}+e^{-n}}=0$.  We then set $f_{\varepsilon}=0$ elsewhere. Then, $(f_{\varepsilon})_{\varepsilon}\in \mathcal{E}_{q}(\Omega)$ but $(f^{2}_{\varepsilon})_{\varepsilon}\notin \mathcal{E}_{q}(\Omega).
$
\end{remark}

\begin{remark}
\label{rm 2 q-generalized}
The example given in Remark \ref{rm 1 q-generalized} shows that $\mathcal{E}_{q}(\Omega)\not\subset \mathcal{E}_{q'}(\Omega)$ if $q\leq q'$. On the other hand, as H\"{o}lder's inequality shows, we always have the inclusions $\mathcal{E}_{q'}(\Omega)\subset \mathcal{E}_{q}(\Omega)$ and $\mathcal{N}_{q'}(\Omega)\subset \mathcal{N}_{q}(\Omega)$ whenever $ q\leq q'$. In this case, we thus obtain a well defined canonical module homomorphism $\mathcal{G}_{q'}(\Omega)\to \mathcal{G}_{q}(\Omega)$, which is obviously not surjective. It is \emph{not injective} either, which says that, under the canonical map,  $\mathcal{G}_{q'}$ cannot be seen as a subsheaf of $\mathcal{G}_{q}(\Omega)$. To show that $\mathcal{G}_{q'}(\Omega)\to \mathcal{G}_{q}(\Omega)$ is not injective when $q'>q$ either, one needs to find a net $(g_{\varepsilon})_{\varepsilon}\in \mathcal{N}_{q}(\Omega)$ that does not belong to $(g_{\varepsilon})_{\varepsilon}\in \mathcal{N}_{q'}(\Omega)$. An example of such a net is $(g_{\varepsilon})_{\varepsilon}$ defined as the net of constant functions: For $\varepsilon\in [n^{-1}-e^{-n},n^{-1}+e^{-n}]$ and $n\geq4$ we define $g_{\varepsilon}=e^{n/q-\sqrt{n}}$ if $\varepsilon\in [n^{-1}-e^{-n}/2,n^{-1}+e^{-n}/2]$ and extend it linearly on each $ [n^{-1}-e^{-n},n^{-1}-e^{-n}/2]$ and $ [n^{-1}+e^{-n}/2,n^{-1}+e^{-n}]$ in such a way that $g_{n^{-1}-e^{-n}}=g_{n^{-1}+e^{-n}}=0$.  We then set $g_{\varepsilon}=0$ elsewhere.
\end{remark}

\begin{remark}
\label{rm 3 q-generalized}
It would also be natural to relax the condition of continuity in the parameter $\varepsilon$ in the construction of our algebras $\mathcal{G}_{q}(\Omega)$ to simply measurability. Analogously, one might strengthen the condition by asking smoothness in $\varepsilon.$
However, when $q\in[1,\infty)$, one would not gain anything new as the resulting spaces are both isomorphic to our $\mathcal{G}_{q}$ defined above. That is the reason why we have decided to impose the continuity dependence in $\varepsilon$ in this paper. To be more precise, 
let $\mathcal{E}_{q,me}(\Omega)$ and $\mathcal{N}_{q,me}(\Omega)$ be the spaces of nets $(f_{\varepsilon})_{\varepsilon}\in{\mathcal E}(\Omega)^{(0,1)}$ such that the vector-valued mapping 
\begin{equation}
\label{eql3}
(0,1)\to  \mathcal{E}(\Omega):\ \varepsilon\mapsto f_{\varepsilon} \quad \mbox{is measurable}
\end{equation}
 and \eqref{eql1} or \eqref{eql2} respectively holds. Consider then $\mathcal{G}_{q,me}(\Omega)=
\mathcal{E}_{q,me}(\Omega)/\mathcal{N}_{q,me}(\Omega).$ 
Then, the natural mapping $ \mathcal{G}_{q}(\Omega)\mapsto [(f_{\varepsilon})_{\varepsilon}]\in \mathcal{G}_{q,me}(\Omega) $ is a $\widetilde{\mathbb{C}}_{\infty}-$module monomorphism. If $q\in[1,\infty)$, it is actually an isomorphism, as follows from the next lemma. In fact, Lemma \ref{rel-s-m} shows that if one (apparently) strengthens the regularity requirement to smoothness dependence in the parameter $\varepsilon$, one actually gets nothing new either.
\end{remark}

\begin{lemma}\label{rel-s-m}
Let $q\in [1,\infty)$. Let the net $(f_\varepsilon)_\varepsilon\in\mathcal{E}(\Omega)^{(0,1)}$ satisfy \eqref{eql1} and \eqref{eql3}. Then, there is a net $(g_{\varepsilon})_{\varepsilon}$ such that the function $(\varepsilon,x)\mapsto g_\varepsilon(x)$ belongs to $C^\infty( (0,1)\times\Omega)$ and such that
\[
(\forall k\in\mathbb N_0)(\forall \omega\Subset \Omega)(\forall s>0)
\Big(\int_0^1\varepsilon^{-s}||f_\varepsilon-g_{\varepsilon}||_{W^{k,p}(\omega)}^q d\varepsilon<\infty\Big).
\]
\end{lemma}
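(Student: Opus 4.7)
The plan is to construct $(g_\varepsilon)_\varepsilon$ by patching together, via a smooth partition of unity in the $\varepsilon$-variable, local smooth approximations of $(\varepsilon,x)\mapsto f_\varepsilon(x)$ on an overlapping dyadic cover of $(0,1)$; a diagonal choice of approximation error---super-polynomially small on each dyadic scale---will produce a total error dominated by a series that converges against any weight $\varepsilon^{-s}$.

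Concretely, I would set $I_n := (2^{-n-1}, 2^{-n+1}) \cap (0,1)$ for $n \geq 1$, so that each $\varepsilon$ lies in at most two of the $I_n$, and fix a subordinate smooth partition of unity $\{\chi_n\}$ with $\chi_n \in C^\infty_c(I_n)$ and $\sum_n \chi_n \equiv 1$ on $(0,1)$. I would enumerate $\mathbb{N}_0 \times \{\omega : \omega \Subset \Omega\}$ by a sequence $(k_n, \Omega_n)_{n \geq 1}$ with $k_n \uparrow \infty$ and $\Omega_n \uparrow \Omega$, so that any prescribed $(k, \omega)$ satisfies $k \leq k_n$ and $\omega \subset \Omega_n$ for all sufficiently large $n$. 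Since by \eqref{eql1} the weight $\varepsilon^{qs-1}$ is bounded above and below on each bounded interval $I_n$, the map $\varepsilon \mapsto f_\varepsilon$ lies in $L^q(I_n; W^{k_n, p}(\Omega_n))$. By density of smooth functions in this Bochner space (a two-stage mollification, first in $\varepsilon$ to gain Bochner smoothness and then in $x$, combined with a cutoff supported in $\Omega$), I select $G_n \in C^\infty((0,1) \times \Omega)$ satisfying
\[
\|f - G_n\|_{L^q(I_n; W^{k_n, p}(\Omega_n))} \leq 2^{-n^2},
\]
and define $g(\varepsilon, x) := \sum_{n \geq 1} \chi_n(\varepsilon)\, G_n(\varepsilon, x)$. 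Only finitely many terms contribute in a neighborhood of any point, so $g \in C^\infty((0,1) \times \Omega)$.

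For the error estimate, fix $k \in \mathbb{N}_0$, $\omega \Subset \Omega$, $s > 0$, and choose $n_0$ with $k \leq k_n$ and $\omega \subset \Omega_n$ for $n \geq n_0$. From $\sum \chi_n \equiv 1$ and the multiplicity-two property,
\[
\|f_\varepsilon - g_\varepsilon\|_{W^{k, p}(\omega)}^q \leq 2^{q-1} \sum_{n : \varepsilon \in I_n} \|f_\varepsilon - G_n(\varepsilon, \cdot)\|_{W^{k,p}(\omega)}^q.
\]
Using $\varepsilon^{-s} \leq 2^{(n+1)s}$ on $I_n$ and the restriction bound $\|\cdot\|_{W^{k,p}(\omega)} \leq \|\cdot\|_{W^{k_n, p}(\Omega_n)}$ valid for $n \geq n_0$ yields
\[
\int_0^1 \varepsilon^{-s} \|f_\varepsilon - g_\varepsilon\|_{W^{k,p}(\omega)}^q \, d\varepsilon \leq C \sum_{n < n_0} \|f - G_n\|_{L^q(I_n; W^{k, p}(\omega))}^q + C \sum_{n \geq n_0} 2^{(n+1)s - n^2 q}.
\]
The first sum is a finite sum of finite terms (by \eqref{eql1} and smoothness of each $G_n$), and the second converges for every $s > 0$ because $n^2 q$ eventually dominates $(n+1)s$.

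The main technical obstacle is the density step: promoting a merely Bochner-measurable, $W^{k_n, p}(\Omega_n)$-valued $\varepsilon$-function to a jointly smooth approximation on $(0,1) \times \Omega$. This requires separability of the target Banach space, forcing the use of $p < \infty$, which is harmless since the class $\mathcal{E}_q(\Omega)$ does not depend on $p$ by Sobolev embedding; then standard combined $\varepsilon$- and $x$-mollification with a spatial cutoff supported in $\Omega$ delivers the required approximating sequence.
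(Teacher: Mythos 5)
Your proposal is correct and follows essentially the same route as the paper's own proof: both split $(0,1)$ into countably many subintervals tied to an exhaustion of $\Omega$ and an increasing Sobolev order, approximate $\varepsilon\mapsto f_\varepsilon$ in the corresponding Bochner space $L^q(I_n;W^{k_n,p}(\Omega_n))$ with super-polynomially small error, and glue (the paper uses the disjoint intervals $[1/(k+1),1/k]$ and density of $\mathcal{D}((1/(k+1),1/k)\times\Omega)$ via Adams' density theorem, where you use an overlapping dyadic cover with a partition of unity in $\varepsilon$ and mollification). The only point to nail down is your density step: restrictions of globally smooth functions need not be dense in $W^{k,p}(\Omega_n)$ for an arbitrary $\Omega_n\Subset\Omega$, so either choose the exhaustion with smooth boundary (segment condition), as the paper does, or carry out your two-stage mollification through an intermediate open set $U_n$ with $\Omega_n\Subset U_n\Subset\Omega$, which works here because each $f_\varepsilon$ is smooth on all of $\Omega$.
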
\begin{proof} We might assume $1<p<\infty$.
Let $(\omega_k)_k$ be an exhaustion of $\Omega$ by compacts, i.e., $\omega_k\Subset \omega_{k+1}$ and
$\bigcup_{k=1}^\infty\omega_k=\Omega$. 
where each $\partial\omega_k$ is smooth\footnote{It actually suffices for our purposes to assume that $\omega_k$ satisfies the segment condition \cite[p.~68]{adams}.}.
By 
(\ref{eql1}), for each $k\in\mathbb N$, there exists $N_k\in\mathbb N$  such that
\[\int_0^{1} \varepsilon^{N_k} \Vert f_\varepsilon\Vert_{W^{k,p}(\omega_{k})}^q d\varepsilon <\infty.\]
So, $\varepsilon\mapsto f_{\varepsilon}$ belongs to the vector-valued space $L^{q}((1/(k+1),1/k),\varepsilon^{N_k} d\varepsilon; W^{k,p}(\omega_k))$. Since \cite[Theorem 3.2, p.~68]{adams} (the restrictions of elements of) $\mathcal{D}(\Omega)$ are dense in $W^{k,p}(\omega_k)$, the space $\mathcal{D}((1/(k+1),1/k), \mathcal{D}(\Omega))=\mathcal{D}((1/(k+1),1/k)\times \Omega)$ is dense in $L^{q}((1/(k+1),1/k),\varepsilon^{N_k} d\varepsilon; W^{k,p}(\omega_k))$. We can thus find $(g_{\varepsilon,k})_{\varepsilon}$ with $\varepsilon\mapsto g_{\varepsilon,k}$ belonging to $\mathcal{D}((1/(k+1),1/k), \mathcal{D}(\Omega))$ such that

\[\int_{\frac1{k+1}}^{\frac1{k}}\varepsilon^{N_k}\Vert f_\varepsilon-g_{\varepsilon,k}\Vert_{W^{k,p}(\omega_{k})}^q
d\varepsilon\le e^{-(k+1)}(k+1)^{-N_k}.\]
Hence, for each $s>0$,
$$
\int_{\frac{1}{k+1}}^{\frac{1}{k}}\varepsilon^{-s}\Vert f_\varepsilon-g_{\varepsilon,k}\Vert_{W^{k,p}(\omega_k)}^qd\varepsilon
\le
 (k+1)^{s}e^{-(k+1)}.
$$
The net $(g_{\varepsilon})_{\varepsilon}$ defined as $g_{\varepsilon}=g_{\varepsilon,k}$ if $\varepsilon\in [1/(k+1), 1/k]$ satisfies all requirements.

\end{proof}

\section{Local Besov type subspaces of $\mathcal{G}_{q}(\Omega)$}
\label{section l besov gf}

We define in this short section subspaces of $\mathcal{G}_{q}(\Omega)$ that  will be shown to correspond to $B_{p,q,loc}^{s}(\Omega)$ and $C^{\infty}(\Omega)$ in a precise fashion. We are interested in nets $(f_\varepsilon)_\varepsilon\in{\mathcal E}_{q}(\Omega)
$ 
such that for given $k\in \mathbb {N}$ and $s\in\mathbb{R}$
\begin{equation}
\label{eqnetg1}
(\forall \omega\Subset \Omega)\Big(\int_0^1\varepsilon^{sq}||f_\varepsilon||_{W^{k,p}(\omega)}^q\frac{d\varepsilon}{\varepsilon}<\infty\Big).
\end{equation}
(With the obvious change when $q=\infty$, namely,
\begin{equation}\label{eqnetg11}
(\forall \omega\Subset \Omega)(\sup_{\varepsilon\in (0,1)}
\varepsilon^{s}||f_\varepsilon||_{W^{k,p}(\omega)}<\infty).)
\end{equation}

\begin{definition} \label{def1} Let $s\in\mathbb{R}$ and $k\in\mathbb{N}_{0}\cup\left\{\infty\right\}$. 
\begin{itemize}
\item[(i)] A  net $(f_{\varepsilon})_{\varepsilon}\in \mathcal{E}_{q}(\Omega)$ is said to belong to $\mathcal{E}^{k,-s}_{q,p}(\Omega)$ if \eqref{eqnetg1} holds (correspondingly \eqref{eqnetg11} if $q=\infty$).
\item[(ii)] $\mathcal{G}^{k,-s}_{q,p}(\Omega)=\{f\in \mathcal{G}_{q}(\Omega): (\exists (f_{\varepsilon})_{\varepsilon}\in \mathcal{E}^{k,-s}_{p,q}(\Omega))\:(f=[(f_{\varepsilon})_{\varepsilon}]) \}.$
\item [(iii)] $\mathcal{G}^{\infty}_{q}(\Omega)=\{f\in \mathcal{G}_{q}(\Omega): (\forall \omega\Subset \Omega)(\exists s\in\mathbb{R})(\forall k\in\mathbb{N})(f_{|\omega}\in \mathcal{G}^{k,-s}_{q,p}(\omega)) \}.$
\end{itemize}
\end{definition}

\medskip

Let us conclude this section with some remarks:

\begin{remark}
It is not hard to see that the following properties hold, we leave their verifications as an exercise to the reader.
\begin{itemize}
\item [(i)] $
{\mathcal{G}}_{q,p}^{k,-s}(\Omega)\subseteq
{\mathcal{G}}_{q,p}^{k_1,-s_1}(\Omega)$ if and only if $k\geq k_1$ and $s\leq
s_1$.
\item [(ii)] Let $P(D)$ be a
differential operator of order $m\leq k$ with constant coefficients. Then
$P(D):{\mathcal{G}}_{q,p}^{k,-s}(\Omega)\rightarrow
{\mathcal{G}}_{q,p}^{k-m,-s}(\Omega).$
\item [(iii)] The definition of $\mathcal{G}^{\infty}_{q}$ is independent of the choice of $p\in[1,\infty]$ (Sobolev's embedding lemma).
\item [(iv)] $\mathcal{G}^{\infty}_{q}$ is a subsheaf of $\widetilde{\mathbb{C}}_{\infty}$-submodules of $\mathcal{G}_{q}$.
\item [(v)] When $q=\infty$, if we regard $\mathcal{G}_{\infty}(\Omega)=\mathcal{G}_{co}(\Omega)$ as a subalgebra of the Colombeau algebra $\mathcal{G}(\Omega)$, we have that $\mathcal{G}_{\infty}^{\infty}(\Omega)=\mathcal{G}^{\infty}(\Omega)\cap \mathcal{G}_{\infty}(\Omega)$, where $\mathcal{G}^{\infty}(\Omega)$ is Oberguggenberger's algebra \cite{ober001} of regular generalized functions.
\end{itemize}
\end{remark}

\section{Characterization of Besov regularity for distributions}
\label{cass}
The goal of this section is to provide a characterization of those generalized functions in $\mathcal{G}_{q}(\Omega)$ that arise from elements of the local Besov spaces of distributions. Our characterization is in terms of the spaces $\mathcal{G}^{k,-s}_{q,p}(\Omega)$ introduced in Section \ref{section l besov gf}. We mention that Theorem \ref{1mtheorem} extends our characterization of Zygmund spaces from \cite{psv}.

\begin{theorem}
\label{1mtheorem} Let $s>0$. We have $\mathcal{G}^{k,-s}_{q,p}(\Omega)\cap\iota(\mathcal{D}'(\Omega)) =\iota(B_{p,q,loc}^{k-s}(\Omega))$. 
\end{theorem}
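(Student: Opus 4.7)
The plan is to reduce the question to compactly supported distributions on $\mathbb{R}^{d}$ via the fine sheaf structure of $\mathcal{G}_{q}$ obtained in Proposition~\ref{11nov}, and then to identify the integral defining membership in $\mathcal{G}^{k,-s}_{q,p}$ with a family of low-order Besov norms, using the LP-pair characterization \eqref{zeq} together with the classical Besov lifting property.

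For the localization, given $T\in\mathcal{D}'(\Omega)$ and $\omega\Subset\Omega$, I would pick $\chi\in\mathcal{D}(\Omega)$ with $\chi\equiv 1$ on a neighborhood of $\overline{\omega}$ and argue that $((\chi T)\ast\phi_{\varepsilon})_{\varepsilon}$ represents $\iota(T)$ on $\omega$: two such cutoffs differ by a distribution whose support has positive distance from $\omega$, and the Schwartz decay of $\phi_{\varepsilon}$ produces a net each of whose derivatives is $O(\varepsilon^{N})$ uniformly on $\omega$ for every $N$, which belongs to $\mathcal{N}_{q}(\omega)$ by Lemma~\ref{lemma null ideal q}. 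The same rapid-decay estimate shows that $\|(\chi T)\ast\phi_{\varepsilon}\|_{W^{k,p}(\mathbb{R}^{d})}$ and $\|(\chi T)\ast\phi_{\varepsilon}\|_{W^{k,p}(\omega)}$ differ by a term of order $O(\varepsilon^{N})$ for every $N$, so the theorem reduces to the equivalence
\[
\int_{0}^{1}\varepsilon^{sq-1}\|S\ast\phi_{\varepsilon}\|_{W^{k,p}(\mathbb{R}^{d})}^{q}\,d\varepsilon<\infty\quad\Longleftrightarrow\quad S\in B_{p,q}^{k-s}(\mathbb{R}^{d})
\]
for $S\in\mathcal{E}'(\mathbb{R}^{d})$.

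For this equivalence I would expand $\|S\ast\phi_{\varepsilon}\|_{W^{k,p}(\mathbb{R}^{d})}^{q}\asymp\sum_{|\alpha|\leq k}\|\partial^{\alpha}S\ast\phi_{\varepsilon}\|_{L^{p}}^{q}$, with constants independent of $\varepsilon$, so that the integral condition becomes $\int_{0}^{1}\varepsilon^{sq-1}\|\partial^{\alpha}S\ast\phi_{\varepsilon}\|_{L^{p}}^{q}\,d\varepsilon<\infty$ for every $|\alpha|\leq k$. Since $s>0$, the moment requirement \eqref{lpcond2} is vacuous at order $-s$ and $\hat{\phi}(0)=1$ forces \eqref{lpcond1} for $\varphi=\psi=\phi$, so $(\phi,\phi)$ is an LP-pair of order $-s$. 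The characterization \eqref{zeq}, combined with $\partial^{\alpha}S\ast\phi\in\mathcal{S}(\mathbb{R}^{d})$ (which makes the first term $\|\partial^{\alpha}S\ast\phi\|_{L^{p}}$ of the LP-pair norm automatically finite), then identifies the latter integral condition with $\partial^{\alpha}S\in B^{-s}_{p,q}(\mathbb{R}^{d})$.

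Finally, I would invoke the classical Besov lifting: $S\in B^{k-s}_{p,q}(\mathbb{R}^{d})$ if and only if $\partial^{\alpha}S\in B^{-s}_{p,q}(\mathbb{R}^{d})$ for every $|\alpha|\leq k$. The forward direction follows at once from the boundedness $\partial^{\alpha}\colon B^{k-s}_{p,q}\to B^{k-s-|\alpha|}_{p,q}$ combined with $B^{k-s-|\alpha|}_{p,q}\hookrightarrow B^{-s}_{p,q}$. For the converse, within the paper's LP-pair framework I would verify that $(\phi,\Delta^{k}\phi)$ is an LP-pair of order $k-s$: its Fourier transform $(-|\xi|^{2})^{k}\hat{\phi}(\xi)$ is nonzero on a small annulus about the origin, and integration by parts together with \eqref{eq mollifier} forces $\int t^{\beta}\Delta^{k}\phi(t)\,dt=0$ for $|\beta|<2k$, which covers $\lfloor k-s\rfloor<2k$. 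Using the identity $S\ast(\Delta^{k}\phi)_{y}=y^{2k}(\Delta^{k}S)\ast\phi_{y}$, the $(\phi,\Delta^{k}\phi)$-Besov norm of $S$ at order $k-s$ reduces to the $(\phi,\phi)$-Besov norm of $\Delta^{k}S=\sum_{|\beta|=k}\binom{k}{\beta}\partial^{\beta}(\partial^{\beta}S)$ at order $-(k+s)$, which is finite under the hypothesis by a second application of the Besov differentiation property. The principal obstacle is exactly this lifting: the naive choice $(\phi,\partial^{\alpha}\phi)$ fails to be an LP-pair because $\widehat{\partial^{\alpha}\phi}$ vanishes on coordinate hyperplanes, so one must either take the Laplacian detour above or appeal to the standard Bessel-potential isomorphism on Besov spaces.
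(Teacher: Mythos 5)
Your treatment of the core equivalence for compactly supported distributions is correct, and it takes a genuinely different route from the paper. The paper applies, for each fixed $\varepsilon$, Peetre's embedding $W^{k,p}(\mathbb{R}^{d})\hookrightarrow B^{k}_{p,\infty}(\mathbb{R}^{d})$ to $g_{\varepsilon}=\varepsilon^{s}T\ast\phi_{\varepsilon}$, writes the $B^{k}_{p,\infty}$-norm through an LP-pair $(\phi,\psi)$ of order $k$, restricts the supremum to the diagonal $y=\varepsilon$, and recognizes the result as the $B^{k-s}_{p,q}$-norm relative to the new pair $(\phi\ast\phi,\phi\ast\psi)$. You instead reduce to the derivative nets $(\partial^{\alpha}S\ast\phi_{\varepsilon})$, read off $\partial^{\alpha}S\in B^{-s}_{p,q}$ from the pair $(\phi,\phi)$, and recover $S\in B^{k-s}_{p,q}$ via the pair $(\phi,\Delta^{k}\phi)$ of order $k-s$, the identity $S\ast(\Delta^{k}\phi)_{y}=y^{2k}(\Delta^{k}S)\ast\phi_{y}$, and $\Delta^{k}S\in B^{-k-s}_{p,q}$; your verification of \eqref{lpcond1}--\eqref{lpcond2} for $(\phi,\Delta^{k}\phi)$ (moments up to $\lfloor k-s\rfloor<2k$, non-vanishing of $(-|\xi|^{2})^{k}\hat{\phi}$ on a small annulus) is exactly what is needed. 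Both proofs exploit the freedom to change LP-pairs; yours trades the Sobolev--Besov embedding for the Laplacian lifting and is arguably more self-contained. The easy inclusion is handled identically in both.

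The gap is in your localization. The claim that $\|(\chi T)\ast\phi_{\varepsilon}\|_{W^{k,p}(\mathbb{R}^{d})}$ and $\|(\chi T)\ast\phi_{\varepsilon}\|_{W^{k,p}(\omega)}$ differ by $O(\varepsilon^{N})$ is false: $\chi T$ is supported in $\operatorname{supp}\chi$, which sticks out of $\omega$, and on the transition region where $0<\chi<1$ there is no smallness at all. For instance, if $T$ has a point mass at $x_{1}\in\operatorname{supp}\chi\setminus\overline{\omega}$ with $\chi(x_{1})\neq0$, the global norm grows like a negative power of $\varepsilon$ while the norm over $\omega$ is rapidly decreasing. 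Rapid decay only compares the norm over $\mathbb{R}^{d}$ with the norm over some $\omega'\Supset\operatorname{supp}\chi$; but on $\omega'\setminus\{\chi\equiv1\}$ your hypothesis controls $T\ast\phi_{\varepsilon}$, not $(\chi T)\ast\phi_{\varepsilon}$, and the discrepancy $(\chi T)\ast\phi_{\varepsilon}-\chi\cdot(T\ast\phi_{\varepsilon})$ is not negligible: Taylor expansion of $\chi$ gains only $\varepsilon^{|\beta|}$ against the nets $T\ast(x^{\beta}\phi)_{\varepsilon}$, about which the hypothesis says nothing. So the implication $\iota(T)\in\mathcal{G}^{k,-s}_{q,p}(\Omega)\Rightarrow T\in B^{k-s}_{p,q,loc}(\Omega)$ is not reduced to the compactly supported case by your argument; what is actually needed is stability of the hypothesis under multiplication by cutoffs (equivalently, a local version of the key estimate), and this requires a genuine argument, e.g.\ the commutator expansion combined with the finite order of $T$ on compacts and a bootstrap in the regularity index. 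Your reduction is fine for the opposite inclusion, where one only restricts a global bound to $\omega$ and uses the legitimate $O(\varepsilon^{N})$ comparison of $T\ast\phi_{\varepsilon}$ with $(\chi T)\ast\phi_{\varepsilon}$ on $\omega$, the supports being separated there. The paper is admittedly also terse here ("the statement is local"), but its reduced statement concerns $T\in\mathcal{E}'(\Omega)$, whose support is compactly contained in the region where the bounds hold, so the local-to-global norm passage is valid in that setting; the step your write-up asserts incorrectly is precisely the passage from $\mathcal{D}'(\Omega)$ to $\mathcal{E}'(\Omega)$, which remains to be justified.
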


Naturally, Theorem \ref{1mtheorem} might be rephrased as follows:

\begin{corollary}
\label{mcor1} Let $r\in\mathbb{R}$. If $k$ is any non-negative integer such that $k>r$, then $$\iota(B_{p,q,loc}^{r}(\Omega))=\mathcal{G}^{k,r-k}_{q,p}(\Omega)\cap\iota(\mathcal{D}'(\Omega)).$$
\end{corollary}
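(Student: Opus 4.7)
Corollary \ref{mcor1} is an immediate reformulation of Theorem \ref{1mtheorem}: setting $s := k - r$ (which is positive since $k > r$), the identifications $r = k - s$ and $r - k = -s$ turn the conclusion of the theorem into that of the corollary. It thus suffices to prove Theorem \ref{1mtheorem}, and I now sketch a plan for that.

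The first step is localization. Using the sheaf property of $\mathcal{G}_q$ (Proposition \ref{11nov}) together with the local nature of $B^{k-s}_{p,q,loc}(\Omega)$, the problem reduces to the following compactly supported statement: for $S \in \mathcal{E}'(\mathbb{R}^d)$ with $\operatorname{supp} S \Subset \omega \Subset \Omega$,
\[
\iota(S) \in \mathcal{G}^{k,-s}_{q,p}(\omega) \iff S \in B^{k-s}_{p,q}(\mathbb{R}^d).
\]
The reduction is standard: one multiplies $T$ by a smooth cutoff $\chi \in \mathcal{D}(\Omega)$ that equals $1$ on a neighborhood of $\overline{\omega}$ and compares $\iota(T)|_\omega$ with $\iota(\chi T)|_\omega$ via Lemma \ref{lemma supp}.

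Next, I would reduce to the base case $k=0$. The representative $(S*\phi_\varepsilon)_\varepsilon$ of $\iota(S)$ given by \eqref{embedding eq} satisfies $\partial^\alpha(S*\phi_\varepsilon) = (\partial^\alpha S)*\phi_\varepsilon$, and $\partial^\alpha S$ is again compactly supported. Since $(\partial^\alpha S)*\phi_\varepsilon$ is a Schwartz function that decays fast away from $\operatorname{supp}(\partial^\alpha S)$ uniformly in $\varepsilon\in(0,1)$, the $L^p(\omega)$-norms from \eqref{eqnetg1} can be replaced, modulo a fast-decaying error, by full $L^p(\mathbb{R}^d)$-norms when $\omega$ is taken large enough. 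Thus $\iota(S) \in \mathcal{G}^{k,-s}_{q,p}(\omega)$ is equivalent to $\iota(\partial^\alpha S) \in \mathcal{G}^{0,-s}_{q,p}(\omega)$ for every $|\alpha|\leq k$. For the base case $k=0$: since $\hat\phi(0)=1\neq 0$, the pair $(\phi,\phi)$ satisfies the frequency condition \eqref{lpcond1} for some small $\sigma>0$, while \eqref{lpcond2} is vacuous for orders $-s<0$; hence $(\phi,\phi)$ is an LP-pair of order $-s$, and the remark after \eqref{zeq} directly gives $\iota(S) \in \mathcal{G}^{0,-s}_{q,p}(\omega) \iff S \in B^{-s}_{p,q}(\mathbb{R}^d)$.

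To conclude, I would invoke the classical Besov lift
\[
S \in B^{k-s}_{p,q}(\mathbb{R}^d) \iff \partial^\alpha S \in B^{-s}_{p,q}(\mathbb{R}^d) \text{ for all } |\alpha|\leq k,
\]
which combined with the above reductions yields the theorem. The main obstacle, as I see it, is this Besov lift: the direction $(\Rightarrow)$ is immediate from the continuity $\partial^\alpha\colon B^{k-s}_{p,q}\to B^{k-s-|\alpha|}_{p,q}$ and the embedding $B^{k-s-|\alpha|}_{p,q}\hookrightarrow B^{-s}_{p,q}$ for $|\alpha|\leq k$, but $(\Leftarrow)$ requires reconstructing $S$ from its derivatives via a Calder\'on-type reproducing formula. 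Within the LP-pair framework of this paper, the cleanest approach is to pick an LP-pair $(\varphi,\psi)$ for $B^{k-s}_{p,q}$ with $\psi = \sum_{|\alpha|=k}\partial^\alpha \psi_\alpha$ and $\psi_\alpha\in\mathcal{S}$: the identity $S*\psi_y = \sum_{|\alpha|=k}y^k\,(\partial^\alpha S)*(\psi_\alpha)_y$ then directly converts the Besov norm of $S$ at order $k-s$ into a sum of Besov norms at order $-s$ of its derivatives.
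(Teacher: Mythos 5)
Your derivation of Corollary \ref{mcor1} itself coincides with the paper's: the corollary is nothing more than Theorem \ref{1mtheorem} with $s=k-r>0$, and that substitution is all the paper does at this point. Where you diverge is in the proof you sketch for Theorem \ref{1mtheorem}. The localization step and the inclusion $\iota(B^{k-s}_{p,q})\subseteq\mathcal{G}^{k,-s}_{q,p}$ (derivatives land in $B^{-s}_{p,q}$, and $(\phi,\phi)$ is an LP-pair of order $-s$) are the same as in the paper. For the converse, however, the paper does not pass through the lifting property $S\in B^{k-s}_{p,q}\iff\partial^{\alpha}S\in B^{-s}_{p,q}$ for $|\alpha|\le k$: it rescales, setting $g_\varepsilon=\varepsilon^{s}T\ast\phi_\varepsilon$, invokes Peetre's embedding $W^{k,p}(\mathbb{R}^{d})\hookrightarrow B^{k}_{p,\infty}(\mathbb{R}^{d})$ uniformly in $\varepsilon$, then specializes $y=\varepsilon$ and replaces the LP-pair $(\phi,\psi)$ of order $k$ by $(\phi\ast\phi,\phi\ast\psi)$, which converts the moderateness bound directly into the $B^{k-s}_{p,q}$ norm of $T$; no reconstruction of $T$ from its derivatives is needed. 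Your route is viable --- the lift theorem is classical, and your $k=0$ case is exactly the paper's remark following \eqref{zeq} --- but as written it has one technical wrinkle: if $\psi=\sum_{|\alpha|=k}\partial^{\alpha}\psi_{\alpha}$, the individual $\psi_{\alpha}$ cannot in general be chosen so that each $(\varphi,\psi_{\alpha})$ is itself an LP-pair (their Fourier transforms typically vanish somewhere on every annulus, e.g. on coordinate hyperplanes), so the bound $\bigl(\int_{0}^{1}(y^{s}\|(\partial^{\alpha}S)\ast(\psi_{\alpha})_{y}\|_{L^{p}})^{q}\,\frac{dy}{y}\bigr)^{1/q}\lesssim\|\partial^{\alpha}S\|_{B^{-s}_{p,q}}$ must be justified for an arbitrary Schwartz kernel at negative order --- a true and standard estimate, but not part of the LP-pair toolkit the paper quotes; alternatively one can simply cite the lifting theorem from the literature. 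The trade-off is clear: the paper's argument stays entirely inside the LP-pair formalism at the cost of the embedding $W^{k,p}\hookrightarrow B^{k}_{p,\infty}$, while yours modularizes the problem into the $k=0$ case plus a classical lift, at the cost of importing that external result (or proving the convolution estimate above).
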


\begin
{proof}[Proof of Theorem \ref{1mtheorem}]
We first point out that the statement of Theorem \ref{1mtheorem} is a local one. Thus, it is enough to show that 
$\iota(B_{p,q}^{k-s}(\mathbb{R}^{d})\cap\mathcal{E}'(\Omega))=\mathcal{G}^{k,-s}_{q,p}(\Omega)\cap\iota(\mathcal{E}'(\Omega)) $.

Assume $T\in B_{p,q}^{k-s}(\mathbb{R}^{d})\cap \mathcal{E}'(\Omega)$. Since partial derivatives continuously act on Besov spaces \cite{peetre76}, we obtain $T^{(\alpha)}\in B_{p,q}^{k-|\alpha|-s}(\mathbb{R})\subseteq B_{p,q}^{-s}(\mathbb{R}^{d})$ for all $|\alpha|\leq k$. It remains to notice that $(\phi,\phi)$ is an LP-pair of order $-s$, so that

$$\int_{0}^{1}\varepsilon^{sq} ||T^{(\alpha)}*\phi_\varepsilon||^{q}_{L^p(\mathbb{R}^{d})} \frac{d\varepsilon}{\varepsilon}<\infty,
$$
whence $\iota(T)\in\mathcal{G}^{k,-s}_{q,p}(\Omega)$.

To prove the reverse inclusion, we now suppose that $T\in\mathcal{E}'(\Omega)$ and $((T\ast\phi_{\varepsilon})_{|\Omega})_{\varepsilon}\in \mathcal{E}^{k,-s}_{q,p}(\Omega)$. We consider the net of smooth functions $g_{\varepsilon}=\varepsilon^{s}(T\ast \phi_{\varepsilon})$. Using the fact that $T$ is compactly supported, our hypothesis then reads
\begin{equation}
\label{eq aux 1}
\int_{0}^{1} \|g_{\varepsilon}\|^{q}_{W^{k,p}(\mathbb{R}^{d})}\frac{d\varepsilon}{\varepsilon}<\infty.
\end{equation}
We now use the embedding theorem \cite[Theorem 4, p.~64]{peetre76}, according to which we have that the inclusion mapping $W^{k,p}(\mathbb{R}^{d})\to B^{k}_{p,\infty}(\mathbb{R}^{d})$ is continuous, namely, there is a constant independent of the net such that  $\|g_{\varepsilon}\|_{B^{k}_{p,\infty}(\mathbb{R}^{d})}\leq C\|g_{\varepsilon}\|_{W^{k,p}(\mathbb{R}^{d})}$.  Therefore, by \eqref{eq aux 1}, 
\begin{align*}
&\varepsilon^{s}||T\ast\phi\ast\phi_{\varepsilon}||_{L^{p}(\mathbb{R}^{d})}+\int_{0}^{1}\left(\sup_{y\in(0,1]}\varepsilon^{s}y^{-k}||T\ast\phi_{\varepsilon}\ast\psi_{y}||_{L^{p}(\mathbb{R}^{d})}\right)^{q}\frac{d\varepsilon}{\varepsilon}
\\
&
\qquad= ||g_{\varepsilon}\ast\phi||_{L^{p}(\mathbb{R}^{d})}+\int_{0}^{1}\left(\sup_{y\in(0,1]}y^{-k}||g_{\varepsilon}\ast\psi_{y}||_{L^{p}(\mathbb{R}^{d})}\right)^{q}\frac{d\varepsilon}{\varepsilon}<\infty,
\end{align*}
where we choose $
\psi\in\mathcal{S}(\mathbb{R}^{d})$ such that $(\phi,\psi)$ forms an LP-pair of order $k$ (cf. (\ref{lpcond1}) and (\ref{lpcond2})). Setting $\varepsilon=1$ in the first term of the above expression, $y=\varepsilon$ in the integral to remove the supremum on $y$, $\phi_1=\phi\ast\phi$, and $\psi_1= \phi\ast\psi$, and noticing that $(\phi_1,\psi_{1})$ is again a Littlewood-Paley pair, we obtain
$$||T\ast\phi_1||_{L^{p}(\mathbb{R}^{d})}+\int_{0}^{1}\varepsilon^{q(s-k)}||T\ast(\psi_{1})_{\varepsilon}||_{L^{p}(\mathbb{R}^{d})}\frac{d\varepsilon}{\varepsilon}<\infty,
$$
which yields $T\in B^{k-s}_{p,q}(\mathbb{R}^{d}) $ and completes the proof of the theorem.
\end{proof}

We can use Theorem \ref{mcor1} to give a generalization of Oberguggenberger's regularity result \cite{ober001} for his classical algebra $\mathcal{G}^{\infty}(\Omega)$.

\begin{corollary}
\label{mcor2} We have $\iota(\mathcal{D}'(\Omega))\cap \mathcal{G}^{\infty}_{q}(\Omega)=\iota(C^{\infty}(\Omega))$.
\end{corollary}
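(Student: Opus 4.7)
The plan is to prove the two inclusions of Corollary \ref{mcor2} separately. The inclusion $\iota(C^{\infty}(\Omega)) \subseteq \iota(\mathcal{D}'(\Omega)) \cap \mathcal{G}^{\infty}_{q}(\Omega)$ will be immediate: for $f \in C^{\infty}(\Omega)$, Proposition \ref{11nov} supplies the constant representative $\iota(f) = [(f)_{\varepsilon}]$, and for any $\omega \Subset \Omega$ the Sobolev norms $\|f\|_{W^{k,p}(\omega)}$ are $\varepsilon$-independent and finite, so for any fixed $s>0$ the integral in \eqref{eqnetg1} converges for every $k$ (when $q=\infty$ one uses \eqref{eqnetg11}, which is even easier). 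A single $s>0$ thus works uniformly in $k$, which gives $\iota(f) \in \mathcal{G}^{\infty}_{q}(\Omega)$.

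For the reverse inclusion, let $T \in \mathcal{D}'(\Omega)$ with $\iota(T) \in \mathcal{G}^{\infty}_{q}(\Omega)$. Since $C^{\infty}$ is a sheaf, it suffices to show that each $x_{0}\in \Omega$ has a neighborhood on which $T$ agrees with a smooth function, so fix $\omega \Subset \Omega$ containing $x_{0}$. By Definition \ref{def1}(iii) there exists $s\in\mathbb{R}$ with $\iota(T)|_{\omega} \in \mathcal{G}^{k,-s}_{q,p}(\omega)$ for every $k\in\mathbb{N}$. Invoking the monotonicity $\mathcal{G}^{k,-s}_{q,p}(\omega) \subseteq \mathcal{G}^{k,-s_{1}}_{q,p}(\omega)$ for $s \leq s_{1}$ recorded after Definition \ref{def1}, I may enlarge $s$ so that $s>0$, which puts us in the setting of Theorem \ref{1mtheorem}. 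That theorem then yields $T|_{\omega} \in B^{k-s}_{p,q,loc}(\omega)$ for every $k\in\mathbb{N}$.

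It remains to deduce smoothness from $T|_{\omega} \in \bigcap_{k\in\mathbb{N}} B^{k-s}_{p,q,loc}(\omega)$. This is standard: using $B^{r}_{p,q} \hookrightarrow B^{r}_{p,\infty}$ together with $B^{r}_{p,\infty} \hookrightarrow W^{m,p}$ for $m<r$, and then Sobolev's embedding $W^{m,p}_{loc} \hookrightarrow C^{j}$ whenever $m - d/p > j$, one obtains $T|_{\omega} \in C^{j}(\omega)$ for every $j$, hence $T|_{\omega} \in C^{\infty}(\omega)$. Sheafifying across a covering of $\Omega$ by such neighborhoods gives $T \in C^{\infty}(\Omega)$, so $\iota(T) \in \iota(C^{\infty}(\Omega))$. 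I do not foresee a genuine obstacle; the only delicate point is the enlargement of $s$ to apply Theorem \ref{1mtheorem}, the rest being routine Besov/Sobolev embedding together with the sheaf property.
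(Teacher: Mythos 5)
Your proof is correct and follows essentially the same route as the paper: localize, enlarge $s$ to be positive, apply Theorem \ref{1mtheorem} for every $k$, and conclude smoothness of $T$ from its membership in all the local Besov spaces. The only cosmetic difference is that the paper exploits the $p$-independence of $\mathcal{G}^{\infty}_{q}$ to work directly with $p=\infty$, where $\bigcap_{k} B^{k-s}_{\infty,q,loc}(\Omega)=C^{\infty}(\Omega)$ is immediate, whereas you keep general $p$ and pass through standard Besov--Sobolev embeddings; both are valid.
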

\begin{proof} One inclusion is obvious. For the other inclusion, localizing, it suffices to show that if $T\in\mathcal{E}'(\Omega)$ and $\iota(T)\in \mathcal{G}^{\infty,-s}_{q,\infty}(\Omega)$ for some $s>0$, then $T\in C^{\infty}(\Omega)$. Given any $k>0$, Theorem \ref{1mtheorem} yields $T\in B_{\infty,q,loc}^{k-s}(\Omega)$, that is, $T\in\bigcap_{k} B_{\infty,q,loc}^{k-s}(\Omega)=C^{\infty}(\Omega)$. 
\end{proof}

\section{Regularity criteria through association} \label{section association}
In general a net $(f_{\varepsilon})_{\varepsilon}\in\mathcal{E}(\Omega)^{(0,1)}$ is said  \cite{col1,ober001} to be associated to a distribution $T\in \mathcal{D}'(\Omega)$
if 
\begin{equation}
\label{eq asso}
f_{\varepsilon}=T+o(1) \quad \mbox{in }\mathcal{D}'(\Omega),
\end{equation}
that is, if for each test function $\rho\in\mathcal{D}(\Omega)$
\[\lim_{\varepsilon\to0^{+}} \langle f_{\varepsilon}, \rho \rangle =\langle T, \rho \rangle.
\]

Interestingly, if one strengthens the speed of convergence in \eqref{eq asso}, sometimes growth rate properties of the net are reflected in regularity properties of the distributions.  A concept that has proved useful in this respect is that of strong association. In fact, many regularity criteria for distributions in the non-linear theory of generalized functions are based on this concept, see \cite{ps,PSV,piscavi,psv}.

The next definition provides a new concept of association that is applicable to the context of our module $\mathcal{G}_{q}(\Omega)$. The special case $q=\infty$ recovers the classical strong association.

\begin{definition}\label{qaa}
Let $T\in\mathcal D'(\Omega)$ and $f\in \mathcal \mathcal \mathcal{G}_q(\Omega)$. We say that $f$ is strongly $q$-associated to $T$ (on $\Omega$) if there is a representative $(f_{\varepsilon})_{\varepsilon}$, that is, $f=[(f_\varepsilon)_\varepsilon]$, such that
 \begin{equation}\label{ass}
(\forall \omega\Subset \Omega)(\exists b>0)(\forall \rho \in \mathcal{D}(\omega))\Big(\int_0^1\varepsilon^{-bq}|\langle T-f_\varepsilon, \rho
\rangle|^q\frac{d\varepsilon}{\varepsilon}<\infty\Big).
\end{equation}
\end{definition}
Note that the definition does not depend on the representative. In the first part of this section, we derive some regularity results for distributions using properties of strongly $q$-associated generalized functions to it. The second part discusses a characterization of equality in $\mathcal{G}_{q}(\Omega)$ using a faster notion of association. We shall need the following useful lemma.

\begin{lemma}
\label{lemma asso}
Let $T\in\mathcal{E}'(\Omega)$ and 
$f=[(f_\varepsilon)_\varepsilon]\in\mathcal G_q(\Omega)$  be strongly $q$-associated. Then, there are an open set $\omega\Subset\Omega$, $k_{0}\in\mathbb{N}_{0}$, $M>0$, and $b>0$ such that 
\begin{equation}
\label{eqnet222}
(\forall \rho \in  C^\infty(\mathbb{R}^{d}) )\Big(\int_0^1t^{-bq}|\langle T-f_t, \rho
\rangle|^q\frac{dt}{t}\leq \Big( M \sup_{y\in \overline{\omega}, |\alpha|\leq k_0}|\rho^{(\alpha)}(y)|\Big)^{q} \Big).
\end{equation}
\end{lemma}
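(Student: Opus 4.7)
The plan is to apply the closed graph theorem to upgrade the pointwise-in-$\rho$ finiteness in Definition \ref{qaa} to a uniform seminorm bound, and then extend the resulting inequality from $\mathcal{D}$ to $C^{\infty}(\mathbb{R}^{d})$ via multiplication by a fixed cutoff. First I would fix nested open sets $\omega_{0}\Subset \omega_{1}\Subset \omega\Subset \Omega$ with $\operatorname{supp}(T)\subset\omega_{0}$, and invoke Definition \ref{qaa} with the open set $\omega$ to obtain an exponent $b>0$. I would then choose cutoffs $\chi\in \mathcal{D}(\omega_{1})$ with $\chi\equiv 1$ on $\overline{\omega_{0}}$ and $\eta\in\mathcal{D}(\omega)$ with $\eta\equiv 1$ on $\overline{\omega_{1}}$, and replace $(f_{\varepsilon})$ by the compactly supported representative $(g_{\varepsilon})=(\chi f_{\varepsilon})$ whose support lies in $K:=\operatorname{supp}\chi\subset \overline{\omega_{1}}$. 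Setting $K':=\operatorname{supp}\eta\subset \overline{\omega}$ and using that $\chi\equiv 1$ on $\operatorname{supp}(T)$, one verifies $\langle T-g_{\varepsilon},\rho\rangle=\langle T-f_{\varepsilon},\chi\rho\rangle$ for every $\rho\in\mathcal{D}_{K'}(\mathbb{R}^{d})$; since $\chi\rho\in\mathcal{D}(\omega)$, this transfers the strong $q$-association hypothesis to $(g_{\varepsilon})$ with the same exponent $b$.

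Next I would consider the linear operator
\[
\Phi:\mathcal{D}_{K'}(\mathbb{R}^{d})\to L^{q}\!\left((0,1),\tfrac{dt}{t}\right),\qquad \Phi(\rho)(t):=t^{-b}\langle T-g_{t},\rho\rangle,
\]
whose source is a Fréchet space and whose target is a Banach space for every $1\leq q\leq \infty$. To apply the closed graph theorem I would verify closedness: if $\rho_{n}\to \rho$ in $\mathcal{D}_{K'}$ and $\Phi(\rho_{n})\to \Psi$ in $L^{q}$, then for each fixed $t$ the continuity of $T$ and $g_{t}$ as distributions on $\mathcal{D}_{K'}$ gives pointwise convergence $\Phi(\rho_{n})(t)\to \Phi(\rho)(t)$, and passing to an a.e.\ convergent subsequence identifies $\Psi$ with $\Phi(\rho)$. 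The closed graph theorem then produces $k_{0}\in\mathbb{N}_{0}$ and $M_{1}>0$ with
\[
\|\Phi(\rho)\|_{L^{q}(dt/t)}\leq M_{1}\sup_{|\alpha|\leq k_{0},\,y\in K'}|\rho^{(\alpha)}(y)|,\qquad \rho\in\mathcal{D}_{K'}(\mathbb{R}^{d}).
\]

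Finally, for arbitrary $\rho\in C^{\infty}(\mathbb{R}^{d})$, since both $T$ and $g_{t}$ are supported in $\overline{\omega_{1}}\subset\{\eta=1\}$, I would write $\langle T-g_{t},\rho\rangle=\langle T-g_{t},\eta\rho\rangle$ with $\eta\rho\in\mathcal{D}_{K'}$. Combining the bound from the previous paragraph with the Leibniz estimate $\|\eta\rho\|_{C^{k_{0}}(K')}\leq C_{\eta}\sup_{|\alpha|\leq k_{0},\,y\in\overline{\omega}}|\rho^{(\alpha)}(y)|$ then yields \eqref{eqnet222} with $M:=M_{1}C_{\eta}$. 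I expect the main obstacle to lie in the closed graph step: closedness itself is routine, but one must carefully track the modified net $(g_{t})$ so that the quantitative $q$-association estimate survives with an unchanged exponent $b$, which is the sole purpose of the nested cutoff construction in the setup.
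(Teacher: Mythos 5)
Your proof is correct, and it reaches the conclusion by a route that is a close cousin of, but not identical to, the paper's. The paper also begins by reducing to a compactly supported representative (compressed into the phrase ``we might assume that also the net is supported on a fixed compact set''), but then argues directly on the Fr\'echet space $C^\infty(\overline{\omega})$: it writes this space as the countable union of the sublevel sets $X_{M,b}$ of the seminorms $\rho\mapsto\bigl(\int_0^1\varepsilon^{-bq}|\langle T-f_\varepsilon,\rho\rangle|^q\,\frac{d\varepsilon}{\varepsilon}\bigr)^{1/q}$, proves closedness of these sets by truncating the integral to $[1/n,1]$ and invoking Beppo Levi, and then applies Baire's theorem; non-empty interior of one absolutely convex $X_{M,b}$ yields \eqref{eqnet222} by symmetry and homogeneity. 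You instead fix the exponent $b$, which Definition \ref{qaa} provides uniformly in $\rho$ for the chosen $\omega$, regard $\rho\mapsto t^{-b}\langle T-g_t,\rho\rangle$ as a linear map from $\mathcal{D}_{K'}(\mathbb{R}^{d})$ into the Banach space $L^{q}((0,1),\frac{dt}{t})$, and apply the closed graph theorem, verifying closedness via pointwise convergence in $t$ together with an a.e.\ convergent subsequence; this is legitimate since the source is Fr\'echet. Both mechanisms ultimately rest on Baire category, but yours replaces the paper's monotone-convergence bookkeeping and its union over the parameter $b$ by the easy graph-closedness check, at the cost of having to use that $b$ in \eqref{ass} does not depend on $\rho$ --- which the quantifier order in Definition \ref{qaa} indeed guarantees. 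Your explicit nested-cutoff construction ($\chi$, $\eta$) spells out both the support reduction and the passage from $\rho\in\mathcal{D}(\omega)$ to arbitrary $\rho\in C^\infty(\mathbb{R}^{d})$ that the paper leaves implicit; note only that, exactly as in the paper, the final estimate \eqref{eqnet222} is then obtained for the compactly supported representative $(\chi f_t)_t$ rather than for the original net, which is how the statement must be read in any case for the pairing with non-compactly supported $\rho$ to make sense.
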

\begin{proof}
Since $T$ has compact support, we might assume that also the net is supported on a fixed compact set. So, let $\omega\Subset \Omega$
be such that $T$ and each $f_{\varepsilon}$ have supports inside $\omega$.
 Let $M>0$ and consider the sets
$$
X_{M,b}=\Big\{\rho\in C^\infty(\overline{\omega}):
\int_0^1\varepsilon^{-bq}|\langle T-f_\varepsilon, \rho
\rangle|^q\frac{d\varepsilon}{\varepsilon}\leq M\Big\}.
$$
By the Beppo Levi theorem, we have that $X_{M,b}=\bigcap_{n=2}^{\infty} X_{M,b,n},$
where $X_{M,b,n}$ is the closed set
$$X_{M,b,n}=\Big\{\rho\in C^\infty(\overline{\omega}):
\int_{1/n}^1\varepsilon^{-bq}|\langle T-f_\varepsilon, \rho
\rangle|^q\frac{d\varepsilon}{\varepsilon}\leq M\Big\}.$$
Our hypothesis is that $C^\infty(\overline{\omega})=\bigcup_{M,b>0} X_{M,b}$, thus, by the Baire theorem, for some $M,b>0$ one of sets $X_{M,b}$ does not have  empty interior, whence (\ref{eqnet222}) follows.

\end{proof}

 \subsection{Some regularity criteria based on strong $q$-association}
 The following theorem provides two regularity criteria for distributions. We recall that when $p=q=\infty$, the Besov spaces coincide with the H\"{o}lder-Zygmund spaces, that is, 
 $B_{\infty,\infty}^{r}(\mathbb{R}^{d})=C^{r}_{\ast}(\mathbb{R}^{d})$ for all $r\in\mathbb{R}$.
 
 \begin{theorem}\label{not1} Let $T\in\mathcal{D}'(\Omega)$ and 
$f\in\mathcal G_q(\Omega)$  be strongly $q$-associated.
\begin{enumerate}
\item[(i)] If $f\in \mathcal{G}^{\infty}_{q}(\Omega)$, then $T\in C^{\infty}(\Omega)$.
\item [(ii)] If $f\in \mathcal G^{k,-s}_{q, L^{\infty}_{loc}}(\Omega)$ for every $s>0,$ then
  $T\in C^{k-s}_{\ast, loc}(\Omega),$ for every $s>0$.
\end{enumerate}
\begin{remark}
Clearly, by H\"{o}lder's inequality, we have the embeddings $B^{s_1}_{p,q_{1}}(\mathbb{R}^{d})\subset B^{s_2}_{p,q_{2}}(\mathbb{R}^{d})$ whenever $s_{1}> s_{2}$ and $q_1\geq q_2$. Therefore, we also obtain in Theorem \ref{not1}(ii) that $T\in B^{k-s}_{\infty,r, loc}(\Omega),$ for every $s>0$ and $ r\in[1,\infty]$. 
\end{remark}
 
\end{theorem}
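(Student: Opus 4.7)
The strategy is to bound $\|(\chi T)\ast\psi_y\|_{L^{\infty}(\mathbb{R}^d)}$ for a cutoff $\chi\in\mathcal{D}(\Omega)$ and then invoke the Littlewood-Paley characterization of Besov spaces recalled in Section~\ref{preli Besov}. Since the claims are local, I may pass to $\chi T\in\mathcal{E}'(\Omega)$; the identity $\langle \chi T-\chi f_\varepsilon,\rho\rangle=\langle T-f_\varepsilon,\chi\rho\rangle$ shows that $\chi f$ is strongly $q$-associated to $\chi T$, so Lemma~\ref{lemma asso} is applicable. I fix an LP-pair $(\phi,\psi)$ of order $k-1$; in particular, $\int t^{\alpha}\psi(t)\,dt=0$ for $|\alpha|\leq k-1$. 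The representative $(f_\varepsilon)_\varepsilon$ witnessing strong $q$-association may simultaneously be assumed to satisfy the integral bound defining $\mathcal{G}^{k,-s}_{q,L^{\infty}_{loc}}(\Omega)$, since any two representatives of $f$ differ by an element of $\mathcal{N}_{q}(\Omega)$, which automatically satisfies all such bounds.

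Decomposing $(\chi T)\ast\psi_y(x)=(\chi T-\chi f_\varepsilon)\ast\psi_y(x)+(\chi f_\varepsilon)\ast\psi_y(x)$, Lemma~\ref{lemma asso} applied with $\rho(z)=\psi_{y}(x-z)$ provides $k_0,b,M>0$ and a compact $\overline{\omega_1}\subset\Omega$ (independent of $x$ and $y$) such that
\[
\int_0^1\varepsilon^{-bq}\bigl|(\chi T-\chi f_\varepsilon)\ast\psi_y(x)\bigr|^q\frac{d\varepsilon}{\varepsilon}\leq\bigl(M\|\psi_y(x-\cdot)\|_{C^{k_0}(\overline{\omega_1})}\bigr)^q\leq C\,y^{-q(d+k_0)}.
\]
Taylor's theorem of order $k$, combined with the vanishing moments of $\psi$ and the Schwartz decay of $\psi$, yields $|(\chi f_\varepsilon)\ast\psi_y(x)|\leq C\,y^k\|f_\varepsilon\|_{W^{k,\infty}(\mathrm{supp}\,\chi)}$, so that for any $s>0$ covered by the hypothesis of (ii),
\[
\int_0^1\varepsilon^{sq}\bigl|(\chi f_\varepsilon)\ast\psi_y(x)\bigr|^q\frac{d\varepsilon}{\varepsilon}\leq C_s\,y^{qk}.
\]

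The main obstacle is that $(\chi T)\ast\psi_y(x)$ is independent of $\varepsilon$, while the two controls above carry the incompatible weights $\varepsilon^{-bq}$ and $\varepsilon^{sq}$; they cannot be added directly. I resolve this by integrating the triangle inequality $|(\chi T)\ast\psi_y(x)|^q\lesssim |A_\varepsilon|^q+|B_\varepsilon|^q$ over the short interval $I_y=(y^{\theta},2y^{\theta})$ against $d\varepsilon/\varepsilon$ (which has mass $\log 2$). Writing $|A_\varepsilon|^q=\varepsilon^{bq}\cdot\varepsilon^{-bq}|A_\varepsilon|^q$ and $|B_\varepsilon|^q=\varepsilon^{-sq}\cdot\varepsilon^{sq}|B_\varepsilon|^q$ and bounding $\varepsilon^{bq}$, $\varepsilon^{-sq}$ by their maxima on $I_y$, the two estimates combine into
\[
(\log 2)\,|(\chi T)\ast\psi_y(x)|^q\lesssim y^{q(\theta b-(d+k_0))}+y^{q(k-\theta s)}.
\]
Choosing $\theta=(k+d+k_0)/(b+s)$ balances both exponents and gives, uniformly in $x$ and $y\in(0,1]$,
\[
\|(\chi T)\ast\psi_y\|_{L^{\infty}(\mathbb{R}^d)}\lesssim y^{(kb-s(d+k_0))/(b+s)}.
\]

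For (ii), given any $s_0>0$, the exponent $(kb-s(d+k_0))/(b+s)$ tends to $k$ as $s\to 0^+$, so it exceeds $k-s_0$ once $s$ is chosen small enough; applying the above with such an $s$ yields $\chi T\in B^{k-s_0}_{\infty,\infty}(\mathbb{R}^d)$ for every cutoff $\chi$, hence $T\in C^{k-s_0}_{\ast,loc}(\Omega)$. For (i), the hypothesis $f\in\mathcal{G}^{\infty}_{q}(\Omega)$ furnishes, on each $\omega\Subset\Omega$, a single value of $s$ valid for all $k\in\mathbb{N}$; since $(kb-s(d+k_0))/(b+s)\to\infty$ as $k\to\infty$, running the same argument with LP-pairs of arbitrarily many vanishing moments produces $T\in B^{r}_{\infty,\infty,loc}(\Omega)$ for every $r>0$, and therefore $T\in C^{\infty}(\Omega)$.
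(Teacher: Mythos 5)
Your proof is correct and is in substance the paper's own argument: the same appeal to Lemma~\ref{lemma asso}, the same splitting into $(T-f_\varepsilon)+f_\varepsilon$, the same trick of integrating over a short interval of the net parameter to reconcile the weights $\varepsilon^{-bq}$ and $\varepsilon^{sq}$, and the same optimization of the two scales, yielding the identical exponent $(kb-s(d+k_0))/(b+s)$, followed by letting $s\to0^{+}$ for (ii) and $k\to\infty$ for (i). The only real difference is cosmetic: you verify the Littlewood--Paley seminorm of $\chi T$ directly with an LP-pair of order $k-1$ (re-proving the needed half of the Besov characterization via Taylor expansion and the vanishing moments), whereas the paper estimates $\|T\ast\phi_{\varepsilon}\|_{W^{k,\infty}}$, with the mollification scale playing the role of your $y$, and then invokes Theorem~\ref{1mtheorem} to conclude.
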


Since Theorem \ref{not1} is of local nature, it is a consequence of the next lemma in combination with Lemma \ref{lemma asso}. In fact, part (i) follows by taking $k\to\infty$, while (ii) is deduced by taking $s\to0^{+}$.

\begin{lemma}
Suppose that $T\in\mathcal{E}'(\Omega)$ satisfies \eqref{eqnet222} for given $k_0\in\mathbb{N}_{0}$, $b>0$, $M$, and $\omega\Subset \Omega$. If $(f_{\varepsilon})_{\varepsilon}\in \mathcal E^{k,-s}_{\infty,q}(\Omega)$, then $T\in C_{\ast,loc}^{k-s_0}(\mathbb{R}^{d})$ where $s_0=s(k+d+k_{0})/(s+b)$.
\end{lemma}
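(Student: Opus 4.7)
The plan is to verify the Littlewood--Paley characterization of $C^{k-s_0}_{\ast,loc}=B^{k-s_0}_{\infty,\infty,loc}$ directly. Since $T\in\mathcal{E}'(\Omega)$ and the claim is local, it suffices to pick a compactly supported $\psi\in\mathcal{D}(\mathbb{R}^d)$ whose Fourier transform is nonvanishing on some annulus (as in \eqref{lpcond1}) and whose moments of order $|\alpha|\leq k-1$ vanish, and then to show
$$\|T\ast\psi_y\|_{L^\infty(\mathbb{R}^d)}\lesssim y^{k-s_0}\qquad\text{as }y\to 0^+.$$
Such $\psi$ is easy to produce since the LP-condition only requires $\hat\psi$ to be nonvanishing on a fixed annulus, which is compatible with compact support.

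For each $x$ in a fixed neighborhood of $\mathrm{supp}(T)$, each small $y$, and each $t\in(0,1)$, I would split
$$|T\ast\psi_y(x)|\leq |\langle T-f_t,\psi_y(x-\cdot)\rangle|+|f_t\ast\psi_y(x)|.$$
Applying \eqref{eqnet222} with $\rho(\cdot)=\psi_y(x-\cdot)$ and using $\sup_{|\alpha|\leq k_0}\|\psi_y^{(\alpha)}\|_{L^\infty}\lesssim y^{-d-k_0}$ produces
$$\int_0^1 t^{-bq}|\langle T-f_t,\psi_y(x-\cdot)\rangle|^q\,\frac{dt}{t}\leq Cy^{-q(d+k_0)}.$$
For the second piece, a $k$-th order Taylor expansion of $f_t$ about $x$, combined with the vanishing moments of $\psi$ and its compact support, gives the pointwise bound $|f_t\ast\psi_y(x)|\leq C'y^k\|f_t\|_{W^{k,\infty}(\omega')}$ uniformly in $x$ and $y$; the hypothesis $(f_t)_t\in\mathcal{E}^{k,-s}_{\infty,q}(\Omega)$ then integrates to
$$\int_0^1 t^{sq}|f_t\ast\psi_y(x)|^q\,\frac{dt}{t}\leq C''y^{kq}.$$

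The decisive step is an averaging/interpolation argument that converts these two $L^q$-in-$t$ estimates into a pointwise control of $|T\ast\psi_y(x)|$. For $a\in(0,1)$ to be chosen, introduce the probability weight $w(t)=c_a\,t^{-bq-1}\chi_{[a,1]}(t)$ with $c_a\sim a^{bq}$; raise the triangle inequality to the $q$-th power and integrate against $w(t)\,dt$. Using $t^{-bq-1}\leq a^{-(b+s)q}t^{sq-1}$ on $[a,1]$ to rebalance the weight of the second term, one finds
$$|T\ast\psi_y(x)|^q\leq C\bigl(a^{bq}y^{-q(d+k_0)}+a^{-sq}y^{kq}\bigr).$$
Optimizing by $a=y^{(k+d+k_0)/(b+s)}$ balances both contributions to $y^{q[bk-s(d+k_0)]/(b+s)}=y^{q(k-s_0)}$, whence $\|T\ast\psi_y\|_{L^\infty}\lesssim y^{k-s_0}$ and therefore $T\in C^{k-s_0}_{\ast,loc}(\Omega)$. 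The main obstacle is precisely this averaging step: the two ingredients are only integral, not pointwise, estimates in $t$, and the delicate point is choosing the weight $w$ supported on $[a,1]$ so that the scaling in $a$ exactly reproduces the exponent $s_0=s(k+d+k_0)/(s+b)$. The endpoint $q=\infty$ is easier since one can simply optimize pointwise in $t$ over the two competing bounds $t^b y^{-d-k_0}$ and $t^{-s} y^k$, leading to the same exponent.
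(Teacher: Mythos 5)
Your argument is correct: the weighted-average step you single out as the main obstacle does work exactly as you sketch it (with $c_a\sim a^{bq}$, the rebalancing $t^{-bq-1}\leq a^{-(b+s)q}t^{sq-1}$ on $[a,1]$, and the choice $a=y^{(k+d+k_0)/(b+s)}$ one indeed lands on the exponent $q(k-s_0)$, since $k-s_0=(bk-s(d+k_0))/(s+b)$), and your moment count is consistent, because $s_0>0$ forces $\lfloor k-s_0\rfloor\leq k-1$. The route differs from the paper's in its endgame. The paper never touches the Besov seminorm directly: it convolves with the fixed mollifier $\phi_\varepsilon$, splits $T=(T-f_t)+f_t$, averages over the dyadic interval $[\eta,2\eta]$ with $dt/t$ using H\"older's inequality (their analogue of your weight $w$), and obtains the Sobolev-norm bound $\|T\ast\phi_\varepsilon\|_{W^{k,\infty}(\mathbb{R}^d)}\leq C(\eta^b\varepsilon^{-d-k-k_0}+\eta^{-s})$, optimized at $\eta=\varepsilon^{(d+k+k_0)/(s+b)}$; the conversion of the resulting rate $\varepsilon^{-s_0}$ in $W^{k,\infty}$ into $B^{k-s_0}_{\infty,\infty,loc}=C^{k-s_0}_{\ast,loc}$ is then delegated to Theorem \ref{1mtheorem} (where the gain of $k$ derivatives comes from the embedding $W^{k,p}\hookrightarrow B^{k}_{p,\infty}$ and a change of LP-pair). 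You instead build the gain of $k$ into the test kernel: a compactly supported $\psi$ with vanishing moments up to order $k-1$, Taylor expansion giving $|f_t\ast\psi_y(x)|\lesssim y^k\|f_t\|_{W^{k,\infty}(\omega')}$, and a direct verification of the LP characterization of $B^{k-s_0}_{\infty,\infty}$. What the paper's route buys is brevity — it reuses Theorem \ref{1mtheorem} and only needs the crude bound $\|(T-f_t)\ast\phi_\varepsilon\|_{W^{k,\infty}}\lesssim\varepsilon^{-d-k-k_0}$; what yours buys is self-containedness at the level of the Besov definition (no appeal to Theorem \ref{1mtheorem}), at the cost of re-proving, via the moment/Taylor trick, essentially the ingredient that theorem encapsulates, plus some routine localization bookkeeping (restricting $x$ to a neighborhood of $\operatorname{supp}T$, handling $y$ bounded away from $0$) that you should spell out in a final write-up.
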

\begin{proof} We might assume that all $f_{\varepsilon}$ have supports contained in a fixed compact set. Let $0<\eta\leq 1/2$ and let $q'$ be such that $1/q+1/q'=1.$ Using H\"{o}lder's inequality and \eqref{eqnet222}, we obtain

\begin{align*}
(\log 2 )\|T\ast \phi_{\varepsilon}\|_{W^{k,\infty}(\mathbb{R}^{d})}&\leq \int_{\eta}^{2\eta} \| (T-f_{t})\ast \phi_{\varepsilon} \|_{W^{k,\infty}(\mathbb{R}^{d})}\frac{dt}{t}+\int_{\eta}^{2\eta} \|f_{t}\ast \phi_{\varepsilon} \|_{W^{k,\infty}(\mathbb{R}^{d})}\frac{dt}{t}
\\
&
\leq \left(\int_{\eta}^{2\eta} t^{bq'-1}dt\right)^{1/q'} \left(\int_{0}^{1} t^{-bq} \| (T-f_{t})\ast \phi_{\varepsilon} \|^{q}_{W^{k,\infty}(\mathbb{R}^{d})}\frac{dt}{t}\right)^{1/q}
\\
&
\qquad \qquad \quad  +\|\phi\|_{L^{1}(\mathbb{R})}\int_{\eta}^{2\eta} \|f_{t} \|_{W^{k,\infty}(\mathbb{R}^{d})}\frac{dt}{t}
\\
&\leq 
(2\eta)^{b}(\log 2)^{1/q'} M \varepsilon^{-d-k-k_{0}} \|\phi\|_{W^{k+k_0,\infty}(\mathbb{R}^{d})}
\\
&
\qquad \qquad \quad +\eta^{-s}(\log 2)^{1/q'}\|\phi\|_{L^{1}(\mathbb{R})}\left(\int_{0}^{1} t^{sq}\|f_{t}\|^{q}_{W^{k,\infty}(\mathbb{R}^{d})}\frac{dt}{t}\right)^{1/q}.
\end{align*}
Hence, 
$$
\|T\ast \phi_{\varepsilon}\|_{W^{k,\infty}(\mathbb{R}^{d})}\leq C(\eta^b\varepsilon^{-d-k-k_{0}}+\eta^{-s}),
$$
where $C>0$. In order to minimize this expression, we choose $\eta=\varepsilon^{(d+k+k_{0})/(s+b)}$ so that
\[
\|T\ast \phi_{\varepsilon}\|_{W^{k,\infty}(\mathbb{R}^{d})}\leq 2C\varepsilon^{-s(d+k+k_{0})/(s+b)}.
\]
The result then follows from Theorem \ref{1mtheorem} in view of this inequality.
\end{proof}

\subsection{A characterization of equality in $\mathcal{G}_{q}(\Omega)$} In this subsection
we study  a strengthened version of the strong $q$-association. We denote as $\mathfrak{S}$ the class of rapidly decreasing functions at $0^{+}$, that is, a measurable function $R:(0,1)\to (0,\infty)$ is said to belong to $\mathfrak{S}$ if for each $b$ we have $R(\varepsilon)=O(\varepsilon^{b})$ as $\varepsilon\to0^{+}$.
\begin{definition} Two generalized functions $f=[(f_\varepsilon)_{\varepsilon}]\in\mathcal{G}_{q}(\Omega)$ and $g=[(g_\varepsilon)_{\varepsilon}]\in\mathcal{G}_{q}(\Omega)$ are said to be rapidly $q$-associated if  \begin{equation}\label{ras}
(\forall \omega\Subset \Omega)(\exists R \in \mathfrak{S})(\forall \rho \in \mathcal{D}(\omega))\Big(\int_0^1\frac{|\langle f_\varepsilon-g_\varepsilon, \rho
\rangle|^q}{R(\varepsilon)}d\varepsilon<\infty\Big).
\end{equation}

\end{definition}

The next proposition generalizes \cite[Theorem 4]{PSV}.

\begin{proposition}\label{rd}
Let $(f_\varepsilon)_{\varepsilon}, (g_\varepsilon)_{\varepsilon}\in\mathcal E_{q}(\Omega)$ be rapidly $q$-associated. 
Then, $[(f_\varepsilon)_\varepsilon]= [(g_\varepsilon)_{_\varepsilon}]$ in $\mathcal G_q(\mathbb R^d).$
\end{proposition}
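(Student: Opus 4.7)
Since both the hypothesis and the sought conclusion are local, I would fix $\omega\Subset\omega'\Subset\Omega$ and set $h_\varepsilon:=f_\varepsilon-g_\varepsilon\in\mathcal{E}_q(\Omega)$; by Lemma \ref{lemma null ideal q} (applied with $p=q$, using the $p$-independence of the null description), proving the claim reduces to showing that
\[
\int_{0}^{1}\varepsilon^{qs-1}\|h_\varepsilon\|_{L^{q}(\omega)}^{q}\, d\varepsilon<\infty \qquad \text{for every $\omega\Subset\Omega$ and every $s\in\mathbb{R}$.}
\]
The first ingredient is a Baire-category argument mirroring the proof of Lemma \ref{lemma asso}: applied to the Fr\'echet space $\mathcal{D}_{\overline{\omega'}}$, the rapid $q$-association \eqref{ras} would yield $R\in\mathfrak{R}$, $M>0$, and $k_0\in\mathbb{N}_0$ such that $\int_{0}^{1}|\langle h_\varepsilon,\rho\rangle|^q/R(\varepsilon)\,d\varepsilon\leq M\|\rho\|_{C^{k_0}(\overline{\omega'})}^q$ for all $\rho\in\mathcal{D}(\omega')$, and hence, because $R\in\mathfrak{R}$ is rapidly decreasing, also
\[
\int_{0}^{1}\varepsilon^{-B}|\langle h_\varepsilon,\rho\rangle|^{q}\, d\varepsilon\leq C_{B}\|\rho\|_{C^{k_{0}}(\overline{\omega'})}^{q} \qquad \forall B>0,\ \forall\rho\in\mathcal{D}(\omega').
\]

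From this Baire-derived estimate I would extract two complementary bounds on $h_\varepsilon$. Testing it against $\rho=\phi_{\delta}(x-\cdot)$ (with $\|\phi_{\delta}(x-\cdot)\|_{C^{k_{0}}}\leq C\delta^{-d-k_{0}}$) and integrating over $x\in\omega$ via Fubini produces the ``low-frequency'' bound
\[
\int_{0}^{1}\varepsilon^{-B}\|h_\varepsilon*\phi_{\delta}\|_{L^{q}(\omega)}^{q}\, d\varepsilon\leq C_{B}'\, \delta^{-q(d+k_{0})}\qquad(\forall B,\delta>0).
\]
Meanwhile, for the complementary ``error'', Minkowski's integral inequality applied to the representation $h_\varepsilon-h_\varepsilon*\phi_{\delta}=\int\!\int_{0}^{1}y\cdot\nabla h_\varepsilon(\cdot-sy)\phi_\delta(y)\,ds\,dy$ gives
\[
\|h_\varepsilon-h_\varepsilon*\phi_{\delta}\|_{L^{q}(\omega)}\leq C\delta\|h_\varepsilon\|_{W^{1,q}(\omega')},
\]
using only the moderateness of $(h_\varepsilon)_\varepsilon\in\mathcal{E}_q(\Omega)$.

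To combine the two bounds, I would partition $(0,1)=\bigsqcup_{n\geq n_{0}}I_{n}$ with $I_{n}:=[2^{-n-1},2^{-n})$ and, on each $I_n$, mollify at the scale $\delta_{n}:=2^{-n\alpha}\delta_{0}$ (for parameters $\alpha>0$ and $\delta_0>0$ to be chosen so that all convolutions make sense). Since $\varepsilon\sim 2^{-n}$ on $I_n$ (so $\varepsilon^{-B}\geq 2^{nB}$ and $\varepsilon^{qs-1}\leq C2^{-n(qs-1)}$), the triangle inequality together with the two estimates above would give
\[
\int_{I_{n}}\varepsilon^{qs-1}\|h_\varepsilon\|_{L^{q}(\omega)}^{q}\,d\varepsilon\leq C\Big(2^{n[1-qs-B+q\alpha(d+k_{0})]}+2^{-nq\alpha}\!\!\int_{I_{n}}\varepsilon^{qs-1}\|h_\varepsilon\|_{W^{1,q}(\omega')}^{q}\,d\varepsilon\Big).
\]
Summing in $n$, the first series is geometric and converges once $B>1-qs+q\alpha(d+k_{0})$; the second sum, via $2^{-nq\alpha}\leq C\varepsilon^{q\alpha}$ on $I_n$ and Fubini, is dominated by $C\int_{0}^{1}\varepsilon^{qs+q\alpha-1}\|h_\varepsilon\|_{W^{1,q}(\omega')}^{q}\,d\varepsilon$, which is finite by moderateness of $h_\varepsilon$ as soon as $s+\alpha\geq N_{1}$, where $N_1$ denotes any moderateness exponent of $\|h_\varepsilon\|_{W^{1,q}(\omega')}$. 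For any $s\in\mathbb{R}$, the choices $\alpha=\max(0,N_{1}-s)$ and then $B$ larger than $1-qs+q\alpha(d+k_{0})$ satisfy both constraints simultaneously, completing the argument.

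The main technical obstacle is precisely this coordinated parameter choice: making $\alpha$ large enough to absorb the $2^{-nq\alpha}$ factor against the polynomial growth coming from moderateness forces $B$ to be correspondingly large, but this is feasible exactly because the rapid decay $R\in\mathfrak{R}$ allows the Baire-derived bound to hold for \emph{every} $B>0$, rather than for only one specific polynomial rate as in the strong-association case. When $q=\infty$, the analogous Baire estimate is pointwise in $\varepsilon$, so one can instead take $\delta=\varepsilon^{\alpha}$ directly without the dyadic partition of $(0,1)$, recovering the argument of \cite[Theorem 4]{PSV}.
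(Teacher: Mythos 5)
Your argument is sound and follows a genuinely different route from the paper's. After the same Baire-category step, the paper does not mollify at all: it localizes so that all representatives have a fixed compact support and then invokes the Schwartz parametrix, writing $\delta=\Delta^{m}\chi+\theta$ with $\chi\in\mathcal{D}^{k_{0}}(\mathbb{R}^{d})$, $\theta\in\mathcal{D}(\mathbb{R}^{d})$, so that $f_{\varepsilon}=\Delta^{m}(f_{\varepsilon}\ast\chi)+f_{\varepsilon}\ast\theta$; the Baire-derived bound applied to the translates of $\chi$ and $\theta$ shows both pieces are $L^{q}$-negligible, and Lemma \ref{lemma null ideal q} makes the derivatives carried by $\Delta^{m}$ harmless, so the proof ends there with no parameter balancing. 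You replace this exact reconstruction by an approximate one: mollification at a dyadically chosen $\varepsilon$-dependent scale, trading the $\delta^{-q(d+k_{0})}$ loss in the tested bound against the arbitrarily fast decay supplied by $R\in\mathfrak{R}$, and absorbing the error $O(\delta\|h_{\varepsilon}\|_{W^{1,q}(\omega')})$ by moderateness. This is more computational but perfectly valid, and it avoids the parametrix; the paper's route is shorter precisely because the identity $\delta=\Delta^{m}\chi+\theta$ removes any need to let the test scale depend on $\varepsilon$. One point you must make explicit: the mollifier in your steps cannot be the fixed $\phi$ of Subsection \ref{notation}, since the moment conditions \eqref{eq mollifier} force $\phi$ to be non-compactly supported, so $\phi_{\delta}(x-\cdot)\notin\mathcal{D}(\omega')$ and $h_{\varepsilon}\ast\phi_{\delta}$ is not even defined ($h_{\varepsilon}$ lives only on $\Omega$). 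Take instead any $\phi_{0}\in\mathcal{D}(B(0,1))$ with $\int\phi_{0}=1$ (no moment conditions are needed in your argument) and $\delta_{0}$ small enough that $x-\mathrm{supp}\,(\phi_{0})_{\delta}\subset\omega'$ for all $x\in\omega$ and $\delta\leq\delta_{0}$; with that replacement both your tested bound and your first-order error estimate are legitimate, and the bookkeeping $\alpha=\max(0,N_{1}-s)$ followed by $B>1-qs+q\alpha(d+k_{0})$ closes the proof as you describe.
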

\begin{proof}
Subtracting one from another, we might simply assume that $g_{\varepsilon}=0$ for all $\varepsilon$. We then have to show that $(f_\varepsilon)_\varepsilon\in \mathcal{N}_{q}(\Omega)$. Since the statement and the hypothesis are local, we might actually assume that all $f_{\varepsilon}$ have supports contained in a fixed relatively open set $\omega\Subset \Omega$. 
Similarly as in Lemma \ref{lemma asso}, 
 there exist $k_0\in \mathbb N$, $M>0$, and $R\in\mathfrak{S}$ such that
\begin{equation}
\label{eqnet333}
(\forall \rho\in C^{k_{0}}(\mathbb R^d))\Big(
\int_0^1\frac{|\langle f_\varepsilon, \rho
\rangle|^q}{R(\varepsilon)}d\varepsilon \leq  M\sup_{y\in \omega, |\alpha|\leq k_0}|\rho^{(\alpha)}(y)|\Big).
\end{equation}
The next step is to prove that  $f_\varepsilon\in\mathcal{N}_{q}(\Omega)$ and for this  we use an idea from the proof of \cite[Theorem 4]{PSV}. In fact, we employ the powerful Schwartz parametrix method \cite{schw}. There are $m\in \mathbb{N},$ $\chi\in \mathcal{D}^{k_{0}}(\mathbb{R}^d)$, and $\theta \in \mathcal{D}(\mathbb{R}^d)$ such that $\delta=\Delta^m\chi+ \theta$, $\operatorname*{supp }\chi + \omega\Subset \Omega$, and $\operatorname*{supp }\theta + \omega\Subset \Omega$. We then obtain the representation
\begin{equation}
\label{par} 
 f_{\varepsilon} =\Delta^m (f_{\varepsilon}*\chi)+ f_{\varepsilon}*\theta, \qquad \varepsilon\in(0,1).
\end{equation}
Applying \eqref{eqnet333} and using that $R\in\mathfrak{S}$, we now obtain that for each $b>0$

$$\int_0^1\varepsilon^{-b}||f_\varepsilon*\chi||^q_{L^\infty(\mathbb R^d)}\frac{d\varepsilon}{\varepsilon} + \int_0^1\varepsilon^{-b}||f_\varepsilon*\theta||^q_{L^\infty(\mathbb R^d)}\frac{d\varepsilon}{\varepsilon}<\infty.
$$
Lemma \ref{lemma null ideal q} now implies that $(f_{\varepsilon}\ast \chi)_{\varepsilon}, (f_{\varepsilon}\ast \theta)_{\varepsilon}\in \mathcal{N}_{q}(\Omega)$. The formula \eqref{par} hence yields $(f_{\varepsilon})_{\varepsilon}\in \mathcal{N}_{q}(\Omega)$, as claimed.
\end{proof}

\section{Global type spaces of generalized functions}\label{section global}
We now define global counterparts on $\mathbb{R}^{d}$ of the local spaces we have been considering in the previous sections. Let $k\in\mathbb{N}_{0}$ and $s\in\mathbb{R}$. The basic blocks are the global spaces of nets (with the obvious change when $q=\infty$)
$$\mathcal{E}^{k,-s}_{q,L^p}(\mathbb{R}^{d})=\left\{f\in \mathcal{E}_{q}(\mathbb{R}^d): \: 
\int_0^1\varepsilon^{qs}||f_\varepsilon||_{W^{k,p}(\mathbb{R}^{d})}^q\frac{d\varepsilon}{\varepsilon}<\infty
 \right\}.$$
 We then set 
 $$\mathcal{E}_{q,L^p}(\mathbb{R}^{d})=\bigcap_{k}\bigcup_{s}\mathcal{E}^{k,-s}_{q,L^p}(\mathbb{R}^{d}), \quad \mathcal{E}^{\infty}_{q,L^p}(\mathbb{R}^{d})=\bigcup_{s}\bigcap_{k} \mathcal{E}^{k,-s}_{q,L^p}(\mathbb{R}^{d}), $$
 and
$$ \mathcal{N}_{q,L^p}(\mathbb{R}^{d})=\bigcap_{k,s}\mathcal{E}^{k,-s}_{q,L^p}(\mathbb{R}^{d}),$$
 and definite our global $\widetilde{\mathbb{C}}_{\infty}$-modules of generalized functions as the quotients
 $$
 \mathcal{G}_{q,L^p}(\mathbb{R}^{d})= \mathcal{E}_{q,L^p}(\mathbb{R}^{d})/\mathcal{N}_{q,L^p}(\mathbb{R}^{d}) \quad \supset \quad  \mathcal{G}^{\infty}_{q,L^p}(\mathbb{R}^{d})= \mathcal{E}^{\infty}_{q,L^p}(\mathbb{R}^{d})/\mathcal{N}_{q,L^p}(\mathbb{R}^{d}).
 $$
 
We note that $\mathcal{G}_{\infty,L^{\infty}}(\mathbb{R}^{d})$ and $\mathcal{G}^{\infty}_{\infty,L^{\infty}}(\mathbb{R}^{d})$ are differentiable algebras. Moreover,  $\mathcal{G}_{q,L^p}(\mathbb{R}^{d})$ is a differential module over $\mathcal{G}_{\infty,L^{\infty}}(\mathbb{R}^{d})$, while $\mathcal{G}^{\infty}_{q,L^p}(\mathbb{R}^{d})$ is a differential module over $\mathcal{G}^{\infty}_{\infty,L^{\infty}}(\mathbb{R}^{d})$. We also mention that $\mathcal{G}_{\infty,L^{p}}(\mathbb{R}^{d})$ coincides with the global algebra denoted as $\mathcal{G}_{p,p}(\mathbb{R}^{d})$ by Biagioni and Oberguggenberger \cite{b-o}.

Certain important distributions can be embedded into our global modules of generalized functions. The natural classes to consider here are the Schwartz $L^p$-based distribution spaces \cite{schw} (cf. \cite{dpv15}):
 
$$
\mathcal{D}'_{L^{p}}(\mathbb{R}^{d})=\bigcup_{k\in\mathbb{N}}W^{-k,p}_{0}(\mathbb{R}^{d}) \quad \mbox{ and }\quad \mathcal{D}_{L^{p}}(\mathbb{R}^{d})=\bigcap_{k\in\mathbb{N}}W^{k,p}(\mathbb{R}^{d}). 
$$
It is not difficult to show that $\mathcal{D}'_{L^{p}}(\mathbb{R}^{d})$ embeds into $\mathcal{G}_{q,L^{p}}(\mathbb{R}^{d})$ via mollification (cf. Subsection \ref{notation})
$$\iota:\mathcal{D}'_{L^{p}}(\mathbb{R}^{d})\mapsto \mathcal{G}_{q,L^{p}}(\mathbb{R}^{d}), \quad \iota(T)=[(T\ast\phi_{\varepsilon})_{\varepsilon}].$$ The next proposition actually shows that $\mathcal{D}'_{L^{p}}(\mathbb{R}^{d})$ is the largest subspace of $\mathcal{S}'(\mathbb{R}^{d})$ that can be embedded into  $\mathcal{G}_{q,L^{p}}(\mathbb{R}^{d})$ in this fashion.

\begin{proposition}
\label{theoremLp} Let $T\in\mathcal{S}'(\mathbb{R}^{d})$. If $(T\ast\phi_{\varepsilon})_{\varepsilon}\in \mathcal{E}_{q,L^{p}}(\mathbb{R}^{d})$, then $T\in\mathcal{D}'_{L^{p}}(\mathbb{R}^{d})$.
\end{proposition}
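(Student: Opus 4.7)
My plan is to reduce Proposition \ref{theoremLp} to two steps: first show that the hypothesis forces $T\in B^{-s}_{p,q}(\mathbb{R}^d)$ for some $s>0$, and then appeal to the classical embedding $B^{-s}_{p,q}(\mathbb{R}^d)\hookrightarrow \mathcal{D}'_{L^p}(\mathbb{R}^d)$. The first step proceeds in the same spirit as the proof of Theorem \ref{1mtheorem}, but instead of exploiting the compact support of $T$ to control the first summand of the Besov norm \eqref{zeq}, I will accommodate the non-compactness by a judicious choice of LP-pair.

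Extracting the $k=0$ instance of the hypothesis $(T\ast\phi_\varepsilon)_\varepsilon\in\mathcal{E}_{q,L^p}(\mathbb{R}^d)$ gives some $s\in\mathbb{R}$ with
\[
\int_0^1 \varepsilon^{qs}\|T\ast\phi_\varepsilon\|_{L^p(\mathbb{R}^d)}^q\,\frac{d\varepsilon}{\varepsilon}<\infty,
\]
and I may freely enlarge $s$ until $s>0$, since $\varepsilon^{qs}$ is decreasing in $s$ on $(0,1)$. In particular the integrand is finite almost everywhere, so I fix $\varepsilon_0\in(0,1)$ for which $T\ast\phi_{\varepsilon_0}\in L^p(\mathbb{R}^d)$ (when $q=\infty$ this holds for every $\varepsilon_0$ directly). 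The key observation is that $(\phi_{\varepsilon_0},\phi)$ is an LP-pair of order $-s$: continuity of $\hat\phi$ together with $\hat\phi(0)=1$ yields $\delta>0$ with $\hat\phi(\xi)\ne0$ on $|\xi|\leq\delta$; setting $\sigma=\delta$ and choosing any $\eta\in(0,1)$, both $|\hat\phi(\varepsilon_0\xi)|>0$ on $|\xi|\leq\sigma$ (since $\varepsilon_0\leq 1$) and $|\hat\phi(\xi)|>0$ on $\eta\sigma\leq|\xi|\leq\sigma$, so \eqref{lpcond1} holds, and \eqref{lpcond2} is vacuous as $-s<0$. By the LP-pair-independent characterization of Besov spaces recalled in Subsection \ref{preli Besov},
\[
\|T\|_{B^{-s}_{p,q}(\mathbb{R}^d)}\asymp \|T\ast\phi_{\varepsilon_0}\|_{L^p(\mathbb{R}^d)}+\Big(\int_0^1 y^{qs}\|T\ast\phi_y\|_{L^p(\mathbb{R}^d)}^q\,\frac{dy}{y}\Big)^{1/q}<\infty,
\]
placing $T$ in $B^{-s}_{p,q}(\mathbb{R}^d)$.

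For the final step I would use the lifting property of Besov spaces: for any even integer $N>s$, the Bessel potential $\tilde T:=(I-\Delta)^{-N/2}T$ belongs to $B^{N-s}_{p,q}(\mathbb{R}^d)$, and the classical chain of embeddings $B^{N-s}_{p,q}\hookrightarrow B^{(N-s)/2}_{p,\infty}\hookrightarrow B^0_{p,1}\hookrightarrow L^p$ (the middle arrow being the loss-of-smoothness embedding $B^{r+\eta}_{p,\infty}\hookrightarrow B^r_{p,1}$, valid for any $\eta>0$) gives $\tilde T\in L^p(\mathbb{R}^d)$. Since $N$ is even, $(I-\Delta)^{N/2}$ is a differential operator of order $N$, so $T=(I-\Delta)^{N/2}\tilde T$ realizes $T$ as a finite linear combination of distributional derivatives of an $L^p$-function, which gives $T\in W^{-N,p}_0(\mathbb{R}^d)\subseteq \mathcal{D}'_{L^p}(\mathbb{R}^d)$. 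The main obstacle is this closing embedding, which relies on Besov machinery external to the paper (lifting, loss-of-smoothness embeddings, $B^0_{p,1}\hookrightarrow L^p$) and requires a little extra attention at the endpoints $p\in\{1,\infty\}$; a more self-contained alternative would replace the Bessel potential by a dyadic Littlewood--Paley decomposition $T=T\ast\phi_{\varepsilon_0}+\sum_{j\geq 0}(T\ast\phi_{2^{-j-1}\varepsilon_0}-T\ast\phi_{2^{-j}\varepsilon_0})$ and rebuild the derivative representation directly using the integrability hypothesis, but this ultimately repackages the same content.
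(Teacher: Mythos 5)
Your argument is correct, but it takes a genuinely different route from the paper's. The paper's proof never passes through a Besov space: it forms the $\mathcal{S}'(\mathbb{R}^{d})$-valued distribution $\mathbf{T}:\rho\mapsto T\ast\rho$, observes that it intertwines translations, and applies the vector-valued Tauberian class-estimate theorem \cite[Theorem 6.1]{pv2019} directly to the hypothesis $(T\ast\phi_{\varepsilon})_{\varepsilon}\in\mathcal{E}_{q,L^{p}}(\mathbb{R}^{d})$, obtaining $\mathbf{T}\in\mathcal{S}'(\mathbb{R}^{d},L^{p}(\mathbb{R}^{d}))$, i.e.\ $T\ast\rho\in L^{p}(\mathbb{R}^{d})$ for every $\rho\in\mathcal{S}(\mathbb{R}^{d})$, and then concludes via the convolution characterization of $\mathcal{D}'_{L^{p}}(\mathbb{R}^{d})$. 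You instead use only the $k=0$ part of the hypothesis to place $T$ in $B^{-s}_{p,q}(\mathbb{R}^{d})$ by exhibiting $(\phi_{\varepsilon_{0}},\phi)$ as an LP-pair of negative order (a correct verification of \eqref{lpcond1}, with \eqref{lpcond2} vacuous), invoking the pair-independence of the definition \eqref{zeq}, and then close with the classical chain of lifting and embedding results $B^{-s}_{p,q}\hookrightarrow\mathcal{D}'_{L^{p}}$. Two comments on what each approach buys. First, your route is not really more elementary in substance: the pair-independence of \eqref{zeq} for arbitrary $T\in\mathcal{S}'(\mathbb{R}^{d})$, which is exactly the step that lets you conclude membership in $B^{-s}_{p,q}$ from finiteness for one admissible pair, is itself an instance of the same Tauberian machinery of \cite{pv2019} that the paper applies directly; you simply access it through the interface set up in the preliminaries. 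Second, your closing step imports external Besov facts (lifting by $(I-\Delta)^{m}$, the loss-of-smoothness embeddings, and $B^{0}_{p,1}\hookrightarrow L^{p}$, all valid at the endpoints $p\in\{1,\infty\}$, as you correctly anticipate), which the paper's argument avoids altogether; in exchange, you obtain the stronger intermediate conclusion $T\in B^{-s}_{p,q}(\mathbb{R}^{d})$ for some $s>0$, which is in the spirit of Theorem \ref{theoremextra}(i). Both proofs are sound; yours trades the direct vector-valued Tauberian argument for standard Besov-space structure theory.
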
   
\begin{proof} Using the convolution average characterization
 of $\mathcal{D}'_{L^{p}}(\mathbb{R}^{d})$ (cf. \cite{dpv15,schw}), it suffices to prove that $T\ast \rho \in L^p(\mathbb{R}^{d})$ for each $\rho\in\mathcal{S}(\mathbb{R}^{d})$. We show this via the vector-valued Tauberian theory for class estimates from \cite{pv2019}. Define the vector-valued tempered distribution $\mathbf{T}$ as $\left\langle \mathbf{T},\rho\right\rangle:=T\ast \rho$ for test functions $\rho\in\mathcal{S}(\mathbb{R}^{d})$. We have that $\mathbf{T}\in\mathcal{S}'(\mathbb{R}^{d},\mathcal{S}'(\mathbb{R}^{d}))$ and we must show that $\mathbf{T}\in\mathcal{S}'(\mathbb{R}^{d},L^p(\mathbb{R}^{d}))$. Being a convolution operator, $T$ intertwines translations. Since $(\phi,\phi)$ is an LP-pair of order $-s$ for any $s> 0$, our hypothesis $(T\ast\phi_{\varepsilon})_{\varepsilon}\in \mathcal{E}_{q,L^{p}}(\mathbb{R}^{d})$ is a particular case of the assumptions from \cite[Theorem 6.1]{pv2019}, so that the latter directly yields the desired membership $\mathbf{T}\in\mathcal{S}'(\mathbb{R}^{d},L^p(\mathbb{R}^{d}))$.
\end{proof}

We end this article by giving global versions of the results from Section \ref{cass}, where in particular we characterize the global Besov spaces.

\begin{theorem}
\label{theoremextra} Let $r\in\mathbb{R}$ and $s>0$.
\begin{itemize}
\item [(i)] We have $\mathcal{G}^{k,-s}_{q,L^{p}}(\mathbb{R}^{d})\cap\iota(\mathcal{D}'_{L^{p}}(\mathbb{R}^{d})) =\iota(B^{k-s}_{p,q}(\mathbb{R}^{d}))$. 
\item [(ii)] For any  integer $k>r$, we have $\iota(B_{p,q}^{r}(\mathbb{R}^{d}))=\mathcal{G}^{k,r-k}_{q,L^{p}}(\mathbb{R}^{d})\cap \iota(\mathcal{D}'_{L^{p}}(\mathbb{R}^{d}))$.
\item [(iii)] We have  $\iota(\mathcal{D}_{L^p}(\mathbb{R}^{d}))=\mathcal{G}^{\infty}_{q,L^p}(\mathbb{R}^{d})\cap\iota(\mathcal{D}'_{L^{p}}(\mathbb{R}^{d})) .$
\end{itemize}
\end{theorem}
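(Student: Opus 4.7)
My plan is to mirror, in the global setting, the proof of Theorem~\ref{1mtheorem} and Corollary~\ref{mcor2}, with Proposition~\ref{theoremLp} playing the role of ensuring that the distributions involved actually belong to $\mathcal{D}'_{L^{p}}(\mathbb{R}^{d})$ so that $\iota$ is well defined. Once (i) is proved, (ii) is immediate by substituting $r=k-s$, while (iii) follows from (i) combined with the standard embedding $B^{r}_{p,q}(\mathbb{R}^{d})\subset W^{m,p}(\mathbb{R}^{d})$ for $r>m$, which yields $\bigcap_{r\in\mathbb{R}}B^{r}_{p,q}(\mathbb{R}^{d})=\mathcal{D}_{L^{p}}(\mathbb{R}^{d})$.

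For the forward inclusion in (i), I start with $T\in B^{k-s}_{p,q}(\mathbb{R}^{d})$. Since partial derivatives act continuously on Besov spaces and, because $s>0$, the pair $(\phi,\phi)$ is an LP-pair of order $-s$, we obtain $T^{(\alpha)}\in B^{-s}_{p,q}(\mathbb{R}^{d})$ for every $|\alpha|\leq k$, whence
\[
\int_{0}^{1}\varepsilon^{sq}\,\|T^{(\alpha)}\ast\phi_{\varepsilon}\|_{L^{p}(\mathbb{R}^{d})}^{q}\,\frac{d\varepsilon}{\varepsilon}<\infty.
\]
This places $(T\ast\phi_{\varepsilon})_{\varepsilon}$ in $\mathcal{E}^{k,-s}_{q,L^{p}}(\mathbb{R}^{d})\subset\mathcal{E}_{q,L^{p}}(\mathbb{R}^{d})$, and Proposition~\ref{theoremLp} then gives $T\in\mathcal{D}'_{L^{p}}(\mathbb{R}^{d})$, so $\iota(T)$ is defined and lies in $\mathcal{G}^{k,-s}_{q,L^{p}}(\mathbb{R}^{d})\cap\iota(\mathcal{D}'_{L^{p}}(\mathbb{R}^{d}))$.

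For the reverse inclusion, take $T\in\mathcal{D}'_{L^{p}}(\mathbb{R}^{d})$ with $\iota(T)\in\mathcal{G}^{k,-s}_{q,L^{p}}(\mathbb{R}^{d})$. Since $\mathcal{N}_{q,L^{p}}(\mathbb{R}^{d})\subset\mathcal{E}^{k,-s}_{q,L^{p}}(\mathbb{R}^{d})$, the canonical representative $(T\ast\phi_{\varepsilon})_{\varepsilon}$ itself belongs to $\mathcal{E}^{k,-s}_{q,L^{p}}(\mathbb{R}^{d})$. Setting $g_{\varepsilon}=\varepsilon^{s}(T\ast\phi_{\varepsilon})$ and invoking the continuous embedding $W^{k,p}(\mathbb{R}^{d})\hookrightarrow B^{k}_{p,\infty}(\mathbb{R}^{d})$, I choose $\psi\in\mathcal{S}(\mathbb{R}^{d})$ so that $(\phi,\psi)$ is an LP-pair of order $k$ and run the same cascade as in Theorem~\ref{1mtheorem} verbatim: specializing $\varepsilon=1$ in the first summand, setting $y=\varepsilon$ to eliminate the supremum over $y$, and passing to the LP-pair $(\phi\ast\phi,\phi\ast\psi)$ of order $k$ yields $T\in B^{k-s}_{p,q}(\mathbb{R}^{d})$. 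Finally for (iii), if $\iota(T)\in\mathcal{G}^{\infty}_{q,L^{p}}(\mathbb{R}^{d})$ then there exists $s>0$ with $(T\ast\phi_{\varepsilon})_{\varepsilon}\in\mathcal{E}^{k,-s}_{q,L^{p}}(\mathbb{R}^{d})$ for every $k\in\mathbb{N}$; part (i) gives $T\in\bigcap_{k}B^{k-s}_{p,q}(\mathbb{R}^{d})=\mathcal{D}_{L^{p}}(\mathbb{R}^{d})$, while the converse is immediate from the mollification bound $\|T\ast\phi_{\varepsilon}\|_{W^{k,p}(\mathbb{R}^{d})}\leq\|\phi\|_{L^{1}(\mathbb{R}^{d})}\|T\|_{W^{k,p}(\mathbb{R}^{d})}$.

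The only point demanding care is verifying that the local chain of LP-pair manipulations of Theorem~\ref{1mtheorem} globalizes without loss of control on the $L^{p}(\mathbb{R}^{d})$-norm. This is not a genuine obstacle, because once $T$ is known to lie in $\mathcal{D}'_{L^{p}}(\mathbb{R}^{d})$ the nets $T\ast\phi_{\varepsilon}$ and $T\ast\phi\ast\phi_{\varepsilon}$ are true $W^{k,p}(\mathbb{R}^{d})$-functions, so that each step of the argument retains global $L^{p}(\mathbb{R}^{d})$-control and no modification to the local proof is needed.
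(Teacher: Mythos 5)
Your proposal is correct and follows essentially the same route as the paper, which itself deduces (ii) and (iii) from (i) and obtains (i) as a straightforward globalization of the proof of Theorem~\ref{1mtheorem}, the key point being that the hypothesis now directly furnishes global $W^{k,p}(\mathbb{R}^{d})$-control so the LP-pair manipulations carry over verbatim. One cosmetic remark: the asserted inclusion $\mathcal{E}^{k,-s}_{q,L^{p}}(\mathbb{R}^{d})\subset\mathcal{E}_{q,L^{p}}(\mathbb{R}^{d})$ is not literally true (the latter requires global bounds for derivatives of every order), but for the specific net $(T\ast\phi_{\varepsilon})_{\varepsilon}$ with $T\in B^{k-s}_{p,q}(\mathbb{R}^{d})$ the needed moderateness at all orders follows from the same argument applied to $T^{(\alpha)}\in B^{k-|\alpha|-s}_{p,q}(\mathbb{R}^{d})$, so the proof stands.
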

\begin{proof}
The property (ii) is a reformulation of (i). Since $\mathcal{D}_{L^p}(\mathbb{R}^d)=\bigcap_{r}B^{r}_{p,q}(\mathbb{R}^{d})$, we obtain that (iii) follows at once from (i). The proof of (i) is a straightforward modification of the proof of Theorem \ref{1mtheorem}, which we therefore omit.
\end{proof}

\bigskip

\subsection*{Acknowledgements} We thank Hans Vernaeve for useful discussions on the subject. 

\bigskip


\end{document}